\theoremstyle{plain}
\newtheorem{thm}{Theorem}[section]
\newtheorem{prop}{Proposition}[section]
\newtheorem{cor}{Corollary}[section]
\newtheorem{lem}{Lemma}[section]
\def\RR{\mathbb{R}}
\def\SS{\mathbb{S}}
\def\inn<#1>{\langle #1 \rangle} 		%内積
\def\dist(#1){\mathrm{dist}^{\Sigma}(#1)} 	%距離
\def\tr{\mathrm{tr}}					%trace
\def\det{\mathrm{det}}				%行列式
\def\nablaS{\nabla^{\Sigma}} 			%接続
\def\lapS{\Delta^{\Sigma}}				%laplasian
\def\divS{\mathrm{div}^\Sigma}			%divergence
\def\intS{\int_{\Sigma}}				%積分
\def\RmS(#1){\mathrm{Rm}^{\Sigma}(#1)}	%Riemann曲率
\def\RicS(#1){\mathrm{Ric}^{\Sigma}(#1)}	%Ricci曲率
\def\anF{\mathcal{F}_\gamma}			%非等方的エネルギー
\def\aniso{\mathrm{tr}(A_\gamma S^2)}	%2nd.f.f.の非等方的ノルム
\def\Ind{\mathrm{Ind}}				%Morse指数
\title{On the Gauss Map of Anisotropic Minimal Surfaces and applications to the Morse Index estimates}
\author{Toshimi Inoue}
\date{}							% Activate to display a given date or no date
\begin{document}
\maketitle
\begin{abstract}
In the paper, we study the Gauss map of a completely immersed anisotropic minimal surface with respect to convex parametric integrand in $\RR^3$. By a local analysis, we prove the discreteness of the critical set of the Gauss map of an anisotropic minimal surface. In particular, we may consider the Gauss map as a branched covering map from an anisotropic minimal surface to the unit sphere. As a consequence, we may obtain an upper and a lower estimate for the Morse index of an anisotropic minimal surface by applying some conformal geometric technics to the Gauss map. 
%We prove that the critical set of Gauss map of an anisotropic minimal surface with convex integrand is isolated. As an application, we also obtain an upper and a lower estimate for the Morse index of an anisotropic minimal surface. 
\end{abstract}
%%%%%1. introduction
\section{Introduction}
Let $\gamma : \SS^2 \to \RR_{>0}$ be a smooth positive function on the unit sphere. For an isometric immersion $\Sigma \to \RR^3$ from an oriented surface, the {\it anisotropic surface energy} $\anF(\Sigma)$ is defined by 
\begin{equation}\label{eq:anisotropicenergy}
	\anF(\Sigma) = \intS \gamma(\nu)d\Sigma, 
\end{equation}
where $\nu$ is the unit normal vector field along $\Sigma$ and $d\Sigma$ is the area element of $\Sigma$. 

A surface $\Sigma$ is called a {\it $\gamma$-anisotropic minimal surface} if it satisfies $\left.\frac{d}{dt}\right|_{t=0}\anF(\Sigma_t) = 0$ for all compactly supported variations. We note that if we let $\gamma = 1$ in \eqref{eq:anisotropicenergy}, then $\anF$ coincides with the area functional. In this sense, we may consider anisotropic minimal surfaces as a generalization of minimal surfaces. 

The stability and the instability of solutions to the variational problem of given functional is an important problem in differential geometry. For minimal surfaces, the Gauss map is a fundamental tool in the study of the stability. Barbosa and Do Carmo \cite{BD} gave a sufficient condition for stability of subdomains of minimal surfaces by studying the image of Gauss map. Tysk \cite{T} studied the spectrum of the Laplacian of covering manifolds and applied it to the Gauss map of a minimal surface to get an upper bound of the Morse index in terms of the total curvature. Nayatani \cite{N} gave a lower bound for the Morse index by considering the pre-image  of a great circle in the unit sphere via the Gauss map. 

In this article, we study the behavior of the Gauss map of anisotropic minimal surfaces with convex integrand $\gamma$ and apply it to obtain an estimate of the Morse index . A positive function $\gamma$ on the unit sphere is called {\it convex} if the second derivative $A_\gamma = (\nabla^{\SS^2})^2\gamma+\gamma I$ of $\gamma$ is positively definite. When $\gamma$ is convex, the Euler--Lagrange equation of the variation of $\anF$ becomes an elliptic equation. Moreover, the Jacobi operator, which is defined through the second variation of $\anF$, becomes a second order linear elliptic operator and its Morse index can be well-defined. For the behavior of the Gauss map, we obtain the following result. 
%%%%%%%main thm1 isolation of singular set 
\begin{thm}\label{thm:main1}
Let $\Sigma \subset \RR^3$ be a $\gamma$-anisotropic minimal surface. Assume $\gamma$ is convex. Then the critical set of the Gauss map of $\Sigma$ has no accumulation points.
\end{thm}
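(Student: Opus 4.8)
The plan is to identify the critical set concretely and then recognize the Gauss map as a quasiregular map, whose branch set is automatically discrete. The Gauss map $\nu:\Sigma\to\SS^2$ has differential $d\nu=-S$, where $S$ is the shape operator (with $T_p\Sigma$ identified with $T_{\nu(p)}\SS^2=\nu(p)^\perp$), so its critical points are exactly the points where $K=\det S=0$. I would show that convexity of $\gamma$ forces $\nu$ to satisfy a uniform quasiregularity inequality, and then invoke the theory of planar quasiregular maps, for which a non-constant map has discrete branch set. If $\nu$ is constant on a component, that component is a piece of a plane; this degenerate case is discarded, and by unique continuation a non-planar connected $\Sigma$ has $\nu$ non-constant everywhere.

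First I would record the pointwise consequence of anisotropic minimality. Fix $p\in\Sigma$ and an orthonormal basis $e_1,e_2$ of $T_p\Sigma$ diagonalizing $S$, with principal curvatures $\kappa_1,\kappa_2$, and set $a=\langle A_\gamma e_1,e_1\rangle$, $c=\langle A_\gamma e_2,e_2\rangle$. The Euler--Lagrange equation $\tr(A_\gamma S)=0$ then reads $a\kappa_1+c\kappa_2=0$, so $\kappa_2=-(a/c)\kappa_1$. Since $\gamma$ is convex, $A_\gamma=(\nabla^{\SS^2})^2\gamma+\gamma I$ is positive definite, and since $\gamma$ is smooth on the compact sphere there is $\Lambda\ge1$ with $\Lambda^{-1}\le A_\gamma\le\Lambda$, whence $a,c\in[\Lambda^{-1},\Lambda]$. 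Two facts follow. If $\det S=\kappa_1\kappa_2=0$ then $\kappa_1=0$, hence $\kappa_2=0$ and $S=0$: every critical point of $\nu$ is totally geodesic, and there both sides of the quasiregularity inequality vanish. Away from such points the linear dilatation of $d\nu=-S$ is $\max(|\kappa_1|,|\kappa_2|)/\min(|\kappa_1|,|\kappa_2|)=\max(a/c,c/a)\le\Lambda^2$, so $|d\nu|^2\le\Lambda^2\,|\det d\nu|$ holds at every point with a constant independent of $p$. Equivalently, in a local isothermal coordinate $z$ for the induced metric and after stereographic projection, the Gauss map $g$ obeys a Beltrami equation $g_{\bar z}=\mu\,g_z$ whose coefficient, governed by $H=-\tr(A_0S_0)/\tr(A_\gamma)$ (the splitting of $S$ and $A_\gamma$ into trace and trace-free parts $S_0,A_0$), satisfies $\|\mu\|_\infty\le(\Lambda^2-1)/(\Lambda^2+1)<1$.

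Having established that $\nu$ is uniformly quasiregular and non-constant on each non-planar component, I would conclude by the Sto\"ilow factorization: a non-constant planar quasiregular map can be written as $h\circ\chi$ with $\chi$ a quasiconformal homeomorphism and $h$ holomorphic. Since $\chi$ carries no branching, the critical set of $g$ equals $\chi^{-1}(\{h'=0\})$, which is discrete because the zeros of a non-constant holomorphic derivative are isolated and $\chi$ is a homeomorphism. Transporting back through the conformal isothermal and stereographic charts (using two charts to cover the north pole of $\SS^2$), the critical set of $\nu$ on $\Sigma$ has no accumulation points.

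The main obstacle is the middle step: extracting from the single scalar equation $\tr(A_\gamma S)=0$ a \emph{uniform} dilatation bound. The delicate points are that the dilatation must be controlled across the degenerate locus $\{S=0\}$, which is resolved by the implication $\det S=0\Rightarrow S=0$ above, and that the bound $\|\mu\|_\infty<1$ be both strict and uniform, which rests precisely on the positive definiteness of $A_\gamma$ together with the compactness of $\SS^2$ (giving $0<\Lambda^{-1}\le a,c\le\Lambda<\infty$). Without convexity $A_\gamma$ could degenerate, the quasiregular structure would fail, and discreteness of the critical set could not be guaranteed.
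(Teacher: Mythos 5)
The first half of your argument is sound and matches the paper's starting point: from $a\kappa_1+c\kappa_2=0$ with $a,c\in[\Lambda^{-1},\Lambda]$ you correctly deduce that $\det S=0$ forces $S=0$ (this is the paper's identity $|A|^2=(a_1/a_2+a_2/a_1)(-K^\Sigma)$), and the uniform dilatation bound $|d\nu|^2\leq\Lambda^2|\det d\nu|$ is a correct observation. The gap is in the final step. Stoilow factorization $g=h\circ\chi$ controls the \emph{branch set} of $g$ (the points where $g$ fails to be a local homeomorphism), which is indeed $\chi^{-1}(\{h'=0\})$ and hence discrete; it does \emph{not} control the \emph{critical set} $\{dg=0\}$, which is what the theorem asserts to be discrete. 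A quasiconformal homeomorphism can perfectly well have critical points: $\chi(z)=z|z|^2=z^2\bar z$ satisfies $|\chi_{\bar z}|=|z|^2=\tfrac12|\chi_z|$, is a smooth homeomorphism of the plane, and has $d\chi(0)=0$. For this map $h=\mathrm{id}$ in the factorization, so $\chi^{-1}(\{h'=0\})=\emptyset$ while the critical set is $\{0\}$; your claimed identification of the two sets is therefore false. Your argument thus leaves open exactly the scenario to be excluded: a sequence of flat points ($S=0$, where both sides of the quasiregularity inequality vanish and the Beltrami coefficient is undefined) accumulating at a point near which $\nu$ is still a local homeomorphism.

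To close the gap one must argue at the level of the derivative, not of the map, and this is what the paper does. It writes $\Sigma$ near a flat point $p$ as a graph of $u$ over $T_p\Sigma$ (after rescaling, via the Colding--Minicozzi graph lemma), notes that $u$ solves the quasilinear elliptic equation $\sum_{i,j}\bar{\gamma}_{ij}(-\nabla u,1)u_{ij}=0$, and invokes Bers's theorem: $u=P+\mathcal{O}(|x|^{n+\varepsilon})$ with $P$ harmonic homogeneous of degree $n\geq3$ (degree at least $3$ because $u$, $\nabla u$ and $\mathrm{Hess}_u$ all vanish at $p$), whence $u_{zz}=z^{n-2}(G(z)+\mathcal{O}(|z|^{\varepsilon}))$ with $G(0)\neq0$; a sequence of flat points converging to $p$ would force $u_{zz}$ to vanish along it, contradicting $G(0)\neq0$. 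If you want to rescue your route, the analogous move is the Bers--Vekua similarity principle applied to $w=g_z$ (or to a Hopf-type quadratic differential as in Koiso--Palmer): $w$ formally satisfies $w_{\bar z}=\mu w_z+\mu_z w$, and if $\mu$ extends in a Lipschitz fashion across the critical set one gets $w=e^{s}(H\circ\chi)$ with $H$ holomorphic, so the zero set of $w$ is discrete. But the required regularity of $\mu$ at the flat points is precisely the nontrivial content your write-up omits and that Bers's expansion supplies in the paper.
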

This result is a generalization of the well known fact for minimal surfaces. The discreteness of the critical set of the Gauss map allows us to consider the Gauss map as a branched covering map onto the unit sphere. In the anisotropic case, we define the {\it anisotropic Gauss map} $\nu_\gamma$ by the composition of the Gauss map $\nu: \Sigma \to \SS^2$ and $\xi: \SS^2 \to \RR^3$ defined by $\xi(\nu) = \nabla^{\SS^2}\gamma(\nu) + \gamma(\nu)\nu$ for $\nu \in \SS^2$. Applying an argument in \cite{T} to the anisotropic Gauss map, we obtain the following index upper bound. 
%%%%%%%main thm2 endex upper bound
\begin{thm}\label{thm:main2}
Let $\Sigma \subset \RR^3$ be a $\gamma$-anisotropic minimal surface with convex integrand $\gamma$. Then there exists a positive constant $C(\gamma)$ which depends only on $\gamma$ such that 
\begin{equation}
	\Ind(\Sigma) \leq C(\gamma)\mathrm{deg}(\nu_\gamma), 
\end{equation}
where $\mathrm{deg}(\nu_\gamma)$ is the mapping degree of the anisotropic Gauss map $\nu_\gamma$ on $\Sigma$. 
\end{thm}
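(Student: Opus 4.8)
The plan is to prove Theorem~\ref{thm:main2} by adapting Tysk's eigenvalue comparison technique \cite{T} to the anisotropic setting, using the anisotropic Gauss map $\nu_\gamma$ as the branched covering whose existence is guaranteed by Theorem~\ref{thm:main1}. The overall strategy rests on the observation that the Morse index of $\Sigma$ counts the number of negative eigenvalues of the anisotropic Jacobi operator $L_\gamma$, and that this count can be controlled by pulling back eigenfunctions of a suitable operator on the sphere $\SS^2$ through the covering map.

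First I would set up the analytic framework by writing the anisotropic Jacobi operator explicitly. From the second variation of $\anF$, the operator takes the schematic form $L_\gamma u = \lapS u + \aniso\, u$ after accounting for the anisotropic metric induced by $A_\gamma$ (here $S$ denotes the shape operator); the precise form involves $A_\gamma$ contracted against the second fundamental form, and convexity of $\gamma$ guarantees ellipticity. The Morse index $\Ind(\Sigma)$ equals the number of negative eigenvalues of $L_\gamma$ on $\Sigma$ with respect to the appropriate weighted measure. The key point, following \cite{T}, is a reduction to a conformally invariant quantity: I would show that the index is bounded by the number of negative eigenvalues of a Schr\"odinger-type operator on the covering, where the potential term $\aniso$ is, up to a factor depending only on $\gamma$, comparable to the pullback of the standard curvature term on $\SS^2$ under $\nu_\gamma$.

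Next, the heart of the argument is the eigenvalue comparison. The branched covering $\nu_\gamma : \Sigma \to \SS^2$ of degree $\mathrm{deg}(\nu_\gamma)$ allows one to pull back the eigenspaces of the Laplacian (or the relevant Schr\"odinger operator) on $\SS^2$ to $\Sigma$. Tysk's method bounds the number of negative eigenvalues on the total space of a branched cover in terms of the degree times a constant governed by the spectrum of the base. Concretely, I would use the variational (min-max) characterization of eigenvalues together with the fact that each eigenfunction on $\SS^2$ lifts to a function on $\Sigma$ whose Rayleigh quotient is controlled, and that the multiplicity is multiplied by at most $\mathrm{deg}(\nu_\gamma)$. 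The anisotropic correction enters only through the uniform bounds on $A_\gamma$ and its inverse over the compact sphere, which produce the constant $C(\gamma)$ depending on $\|A_\gamma\|$ and $\|A_\gamma^{-1}\|$ but not on $\Sigma$ itself.

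The main obstacle I anticipate is establishing the conformal comparison between the potential term $\aniso$ in the anisotropic Jacobi operator and the pullback under $\nu_\gamma$ of the curvature term on the sphere. In the isotropic case, the Gauss map is conformal and the relation $|K|\,d\Sigma = \nu^*(d\SS^2)$ makes the total curvature literally the pullback of the spherical area form, which is what drives Tysk's estimate. In the anisotropic case, $\nu_\gamma$ is only a branched covering for the anisotropic structure, and the conformality must be replaced by a quasi-conformality estimate controlled by the eccentricity of $A_\gamma$; verifying that the resulting distortion is bounded uniformly over $\SS^2$ (hence yielding a $\gamma$-dependent but $\Sigma$-independent constant) is the delicate step. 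I would handle this by working with the anisotropic metric $\inn<A_\gamma \cdot, \cdot>$ on $\Sigma$ under which $\nu_\gamma$ becomes genuinely conformal, translating the Jacobi operator into this metric, and then comparing the two metrics using the spectral bounds on $A_\gamma$ afforded by convexity and compactness of $\SS^2$.
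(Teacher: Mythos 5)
Your proposal follows essentially the same route as the paper's proof: reduce the index to the negative-eigenvalue count of a Schr\"odinger operator whose potential is comparable (via the bounds $\lambda_\gamma I\leq A_\gamma\leq\Lambda_\gamma I$) to the anisotropic Gauss curvature, establish a two-sided quasi-conformality estimate for $\nu_\gamma$ with distortion controlled only by $\gamma$ (this is Lemma~\ref{lem:energyestimate}), and then apply Tysk's heat-trace comparison for the branched covering together with the Cheng--Li heat kernel bound. One minor caution: $\nu_\gamma$ does not in general become genuinely conformal in the metric $\inn<A_\gamma\cdot,\cdot>$, since the pullback metric is $SA_\gamma^{2}S$, which need not be proportional to $A_\gamma$ (or to $I$) when $S$ and $A_\gamma$ fail to commute --- but your fallback to a two-sided comparison controlled by the eccentricity of $A_\gamma$ is exactly what is needed and is what the paper carries out.
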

%In particular, if the total curvature of $\Sigma$ is finite, then its Morse index is also finite. 

Another application of Theorem\ref{thm:main1} is the following generalization of Nayatani's Morse index lower bound in \cite{N}. 
\begin{thm}\label{thm:main3}
Let $\Sigma \subset \RR^3$ be a $\gamma$-anisotropic minimal surface with convex integrand $\gamma$. Let $S$ be a great circle in the unit sphere $\SS^2$. Then, the Morse index $\Ind(\Sigma)$ of $\Sigma$ satisfies
\begin{equation}
	\Ind(\Sigma) \geq b(\nu, \nu^{-1}(S)) + 1 -2g, 
\end{equation}
where $b(\nu, \nu^{-1}(S))$ is the branching order of the Gauss map $\nu$ with respect to $\nu^{-1}(S)$ and $g$ is the genus of $\Sigma$. 
\end{thm}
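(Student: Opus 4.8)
The plan is to adapt Nayatani's nodal-domain argument to the anisotropic Jacobi operator $L_\gamma$, with Theorem \ref{thm:main1} supplying exactly the finiteness that turns the relevant nodal set into a graph. Fix a unit vector $a \in \RR^3$ so that $S = \{x \in \SS^2 : \inn<x,a> = 0\}$, and set $u = \inn<\nu, a>$ on $\Sigma$. Since translation of $\RR^3$ in the direction $a$ leaves $\nu$ unchanged, it preserves both the integrand $\gamma(\nu)$ and the area element, hence it is a symmetry of $\anF$; the normal component of its variation field is precisely $u$, so $u$ is a Jacobi field, $L_\gamma u = 0$. By construction the zero set of $u$ is exactly $N := \nu^{-1}(S)$, so extracting the Morse-theoretic content of $u$ reduces to understanding the topology of $N$.

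Next I would count the nodal domains of $u$. By Theorem \ref{thm:main1} the critical set of $\nu$ is discrete, so on the (compactified) genus-$g$ surface $N$ is a finite graph: away from branch points it is a smooth embedded $1$-manifold, while at a branch point $p \in N$ of branching order $m_p$ the Gauss map is a local branched cover of degree $m_p + 1$, so the great circle through $\nu(p)$ pulls back to exactly $2(m_p+1)$ nodal rays issuing from $p$. Taking the branch points on $N$ as vertex set, the handshake relation gives $E = \sum_p (m_p + 1) = |V| + b$ with $b = b(\nu, \nu^{-1}(S))$, and Euler's formula $|V| - E + F = 2 - 2g$ yields $F = b + 2 - 2g$. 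Since a nodal circle disjoint from the branch locus only raises $F$, the number $k$ of nodal domains satisfies $k \geq b + 2 - 2g$.

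To convert domains into negative directions, on each nodal domain $\Omega_i$ define $v_i = u$ on $\Omega_i$ and $v_i = 0$ elsewhere; as $u$ vanishes on $N$, each $v_i$ lies in the form domain of $L_\gamma$. Integrating the index form $Q_\gamma$ by parts and using $L_\gamma u = 0$ together with the vanishing boundary terms gives $Q_\gamma(v_i, v_j) = 0$ for all $i, j$, so $V := \mathrm{span}\{v_1, \dots, v_k\}$ is a $k$-dimensional totally isotropic subspace. Decomposing the form domain into the negative, null, and positive spectral subspaces of $L_\gamma$ and projecting $V$ onto the negative part, the kernel of this projection is $V \cap \ker Q_\gamma$; by elliptic regularity any element of this kernel is a genuine smooth Jacobi field, and matching normal derivatives across the arcs of $N$ forces all its coefficients to coincide, so the element is a multiple of $u$. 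Hence $\dim(V \cap \ker Q_\gamma) \le 1$ and $\Ind(\Sigma) \ge k - 1 \ge b + 1 - 2g$, as claimed.

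The main obstacle is the local statement at the branch points. In the isotropic case the conformality of $\nu$ makes $u$ locally the real part of $z^{m+1}$, so the $2(m+1)$ nodal rays are immediate; here $\nu$ is not conformal, so I would instead invoke the local normal form established in the proof of Theorem \ref{thm:main1} to certify both that $\nu$ is a branched cover of the stated order near each critical point and that $u = \inn<\nu,a>$ inherits exactly $2(m_p+1)$ nodal rays. A secondary technical point, when $\Sigma$ is complete with finite total curvature rather than compact, is to carry out the Euler-characteristic bookkeeping on the conformal compactification of genus $g$ and to verify that the ends and the (possibly noncompact) nodal domains contribute admissibly to both the count and the form-domain membership of the $v_i$.
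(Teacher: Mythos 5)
Your proposal is correct and follows essentially the same route as the paper: the translation Jacobi field $\langle \nu, a\rangle$, the identification of its nodal set with $\nu^{-1}(S)$, the local branched-cover structure furnished by Theorem \ref{thm:main1} (via Bers' expansion) to realize $\nu^{-1}(S)$ as an embedded pseudograph with $t$ vertices and $b+t$ edges, and an Euler-characteristic count giving at least $b+2-2g$ nodal domains. The only differences are cosmetic: where you unwind the nodal-domain-to-index step by hand with a totally isotropic subspace, the paper simply invokes the Courant nodal domain theorem (Proposition \ref{prop:Courant}); and where you apply Euler's formula as an equality plus an ad hoc correction for non-cellular faces, the paper uses the pseudograph inequality $v-e+N\geq 2-2g$ of Lemma \ref{eq:pgraphEuler} directly.
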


This article is organized as follows. In section 2, we prepare some basic notions and properties of the geometry of anisotropic minimal surfaces. We also give a proof of Theorem \ref{thm:main1} in this section. In section 3, we study the anisotropic Gauss map of anisotropic minimal surfaces and give a proof of Theorem \ref{thm:main2}. Section 4 contains a generalization of the index lower bound by Nayatani to the case of anisotropic minimal surfaces. We will apply it to studying the instability for anisotropic minimal surfaces with low genus. 

%The author would like to thank Yukio Otsu for . He also would like to 

%%%%%2. anisotropic minimal surface
\section{Geometry of anisotropic minimal surfaces}
In this section, we prepare some basic properties of $\gamma$-anisotropic minimal surfaces. We refer to \cite{K} and \cite{KP1}. Let $\Sigma \to \RR^3$ be an immersion of an oriented surface. Let $\gamma: \SS^2 \to \RR_{>0}$ be a smooth positive function on the unit sphere. For any variation $\Sigma_t$ of $\Sigma$, the first variation of the anisotropic surface energy $\anF$ defined by \eqref{eq:anisotropicenergy} is given by
\begin{equation}\label{eq:1stvariation}
	\left.\frac{d}{dt}\right|_{t=0}\anF(\Sigma_t) = -\intS H_\gamma u d\Sigma, 
\end{equation}
where $u$ is the normal component of given variation and $H_\gamma$ is the {\it anisotropic mean curvature}. As a consequence of \eqref{eq:1stvariation}, we see that $\Sigma$ is a critical point of the functional $\anF$ if and only if its anisotropic mean curvature vanishes identically. 

The integrand $\gamma$ is called {\it convex} or {\it elliptic} if the hessian  $A_\gamma = (\nabla^{\SS^2})^2\gamma+\gamma I$ of $\gamma$ (extended 1-homogeneously) satisfies $A_\gamma > 0$. Geometrically speaking, the convexity of $\gamma$ implies that $\gamma$ is the support function of the convex body 
\begin{equation}\label{eq:Wulffbody}
	\bigcap_{\nu \in \SS^2} \{x \in \RR^3 | \inn<x, \nu> \leq \gamma(\nu)\}. 
\end{equation}
The boundary $W_{\gamma}$ of the convex set \eqref{eq:Wulffbody} is called the {\it Wulff shape} of $\gamma$. We note that if $\gamma$ is convex, the Wulff shape $W_{\gamma}$ has a parametrization
\begin{equation}\label{eq:Wulffshape}
	W_\gamma = \xi(\SS^2), 
\end{equation}
where $\xi$ is the map, which is called the {\it Cahn--Hoffman map for $\gamma$} (see \cite{K} for details), defined by $\xi(\nu) = \nabla^{\SS^2}\gamma(\nu) + \gamma(\nu)\nu$ from $\SS^2$ to $W_\gamma$. The following fact is a fundamental property for the Cahn--Hoffman map (see \cite{Sch}). 

\begin{lem}\label{lem:CHnormal}
Let $\gamma : \SS^2 \to \RR_{>0}$ be a smooth positive function on $\SS^2$. Then the Cahn--Hoffman map $\xi$ for $\gamma$ satisfies $\inn<d\xi_{\nu}(X), \nu> = 0$ for any $\nu \in \SS^2$ and $X \in T_{\nu}\SS^2$. 
\end{lem}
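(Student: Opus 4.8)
The plan is to prove the identity by a direct computation of the differential $d\xi_\nu$ along a curve, exploiting the single structural fact that $\nabla^{\SS^2}\gamma(\nu)$ is always tangent to $\SS^2$ while the position vector $\nu$ is the outward normal. Fix $\nu \in \SS^2$ and $X \in T_\nu\SS^2$, and choose a smooth curve $\nu(t)$ in $\SS^2$ with $\nu(0) = \nu$ and $\nu'(0) = X$. Writing $Z(t) = \nabla^{\SS^2}\gamma(\nu(t))$, I would differentiate $\xi(\nu(t)) = Z(t) + \gamma(\nu(t))\,\nu(t)$ at $t=0$ and pair the result with $\nu$. The product rule produces three terms; using $\inn<\nu,\nu> = 1$ and $\inn<X,\nu> = 0$ (since $X$ is tangent), the last term drops out and the remaining contribution is $\inn<Z'(0),\nu> + \left.\frac{d}{dt}\right|_0 \gamma(\nu(t))$.

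The key step is to evaluate $\inn<Z'(0),\nu>$. Because $Z(t)$ is tangent to $\SS^2$ at the moving point $\nu(t)$, the identity $\inn<Z(t),\nu(t)> \equiv 0$ holds for all $t$; differentiating it at $t=0$ gives $\inn<Z'(0),\nu> = -\inn<Z(0),X> = -\inn<\nabla^{\SS^2}\gamma(\nu),X>$. On the other hand, the directional derivative satisfies $\left.\frac{d}{dt}\right|_0 \gamma(\nu(t)) = \inn<\nabla^{\SS^2}\gamma(\nu),X>$. Substituting both expressions, the two gradient pairings cancel exactly, yielding $\inn<d\xi_\nu(X),\nu> = 0$.

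The main (and essentially only) obstacle is bookkeeping: one must keep the ambient $\RR^3$ inner product distinct from intrinsic quantities and remember that $Z(t)$ lives in the tangent plane at the \emph{moving} point $\nu(t)$, so that its normal component varies precisely to cancel the derivative of $\gamma$. As an alternative I could package the argument through the $1$-homogeneous extension $\bar\gamma(x) = |x|\,\gamma(x/|x|)$, for which $\xi = D\bar\gamma|_{\SS^2}$ and $d\xi_\nu = D^2\bar\gamma(\nu)$; then $\inn<D^2\bar\gamma(\nu)X,\nu> = \inn<D^2\bar\gamma(\nu)\nu,X>$ by symmetry of the Hessian, while $D^2\bar\gamma(\nu)\nu = 0$ since $D\bar\gamma$ is $0$-homogeneous (Euler's relation), giving the claim at once. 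I would present the elementary curve computation and mention the homogeneity viewpoint as a remark.
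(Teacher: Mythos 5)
Your proof is correct. The paper itself does not prove Lemma \ref{lem:CHnormal}; it only states it with a citation to Schneider's book, so your argument supplies a self-contained verification rather than competing with an in-text proof. The curve computation is sound: the only facts used are $\inn<X,\nu>=0$, the identity $\inn<Z(t),\nu(t)>\equiv 0$ for the tangential gradient, and $\left.\tfrac{d}{dt}\right|_{0}\gamma(\nu(t))=\inn<\nabla^{\SS^2}\gamma(\nu),X>$ (which holds because the round metric is the restriction of the Euclidean one, so the intrinsic and ambient pairings of two tangent vectors agree --- the bookkeeping point you flag). The two gradient terms cancel as you say. Your alternative via the $1$-homogeneous extension $\bar\gamma$, with $\xi=D\bar\gamma|_{\SS^2}$, $d\xi_\nu=D^2\bar\gamma(\nu)$ on $T_\nu\SS^2$, and $D^2\bar\gamma(\nu)\nu=0$ from Euler's relation, is the standard argument in the cited reference and is also the viewpoint the paper implicitly adopts later when it identifies $d\xi=A_\gamma$ with the $1$-homogeneous Hessian; either presentation is fine, and stating the homogeneity version as a remark ties your lemma proof to the way $A_\gamma$ is used in Section 2.
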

%\begin{proof}
%To calculate the differential of $\gamma$, we extend $\gamma$ 1-homogeneously to $\RR^3$ by
%\[
%	\bar{\gamma}(x) = |x|\gamma \left(\frac{x}{|x|}\right) \quad \text{for any $x \in \RR^3\setminus\{0\}$}. 
%\]
%Then, for any $t > 0$, it follows that $\bar{\gamma}(tx) = t\bar{\gamma}(x)$. Using this identity, we obtain
%\begin{equation}
%	\inn<D\gamma(\nu), \nu> 
%		= \left.\frac{d}{dt}\right|_{t=0}\bar{\gamma}((1+t)\nu) 
%		= \left.\frac{d}{dt}\right|_{t=0}(1+t)\bar{\gamma}(\nu) = \bar{\gamma}(\nu). 
%\end{equation}
%This implies that $D\gamma(\nu) = \nabla^{\SS^2}\gamma(\nu) + \gamma(\nu)\nu = \xi(\nu)$. 
%
%For $X \in T_{\nu}\SS^2$, we have
%\begin{equation}
%	\inn<d\xi_{\nu}(X), \nu> 
%		= \inn<D_XD\gamma(\nu), \nu> 
%		= \inn<D_{\nu}D\gamma(\nu), X>. 
%\end{equation}
%Here, we used the symmetry of the hessian. Since $D\gamma$ is 0-homogeneous vector field, we have $D_{\nu}D\gamma = 0$. Thus, we obtain $\inn<d\xi_{\nu}(X), \nu> = 0$.
%\end{proof}
By Lemma \ref{lem:CHnormal}, the tangent planes $T_\nu{\SS^2}$ at $\nu \in \SS^2$ and $T_{\xi(\nu)}{{W_\gamma}}$ at $\xi(\nu) \in W_\gamma$ are parallel. In particular, we may consider the differential $d\xi = A_\gamma$ at $\nu \in \SS^2$ of the Cahn--Hoffman map $\xi$ as an endomorphism on $T_{\nu}\SS^2$. 

For a surface $\Sigma$, the map $\nu_{\gamma} = \xi \circ \nu$ from $\Sigma$ to $W_\gamma$ is called the {\it anisotropic Gauss map}. Since the planes $T_{p}\Sigma$ at $p \in \Sigma$ and $T_{\nu_{\gamma}(p)}W_\gamma$ at $\nu_{\gamma}(p) \in W_\gamma$ are parallel, we may consider the differential $d\nu_{\gamma} = -A_{\gamma}S$ as an endomorphism on $T_{p}\Sigma$. Here, $S$ denotes the usual shape operator of $\Sigma$. 
Hence, the anisotropic mean curvature can be written as 
\begin{equation}\label{eq:Ameancurve}
H_\gamma = \tr(-d\nu_\gamma) = \tr(A_\gamma S). 
\end{equation}

Let us assume that $\Sigma$ is a $\gamma$-anisotropic minimal surface with convex integrand $\gamma$. We give a local expression of \eqref{eq:Ameancurve}. Given $p \in \Sigma$, choose a frame $\{e_i\}_i$ diagonalizing the second fundamental form $A$ and let $\{\kappa_i\}_i$ be the principal curvatures. We set $a_i = \inn<A_\gamma e_i, e_i>$. Note that $a_1$ and $a_2$ are positive by the convexity of $\gamma$. Under these notations, we have 
\begin{equation}\label{eq:locAmean}
	H_\gamma = a_1\kappa_1+a_2\kappa_2. 
\end{equation}

Using \eqref{eq:locAmean}, the Gaussian curvature $K^\Sigma$ of $\Sigma$ is given by
\begin{equation}\label{eq:Gausscurve}
	K^\Sigma = \kappa_1\kappa_2 = -\frac{a_1}{a_2}{\kappa_1}^2 \leq 0. 
\end{equation}
Here, we used the convexity of $\gamma$. Therefore, a $\gamma$-anisotropic minimal surface has nonpositive Gaussian curvatute. 

Before showing Theorem \ref{thm:main1}, we consider a graphical surface. Let $u = u(x,y)$ be a smooth function on a domain $\Omega \subset \RR^2$. Since the unit normal $\nu$ of $ \mathrm{Graph}_u$ is given by
\begin{equation}
	\nu = \frac{(-u_x, -u_y, 1)}{\sqrt{1 + |\nabla u|^2}}, 
\end{equation}
the anisotropic energy $\anF$ of $\mathrm{Graph}_u$ becomes
\begin{equation}\label{eq:anisograph}
	\anF(\mathrm{Graph}_u) = \int_{\Omega} \gamma(\nu) \sqrt{1+|\nabla u|^2}\,dxdy 
	= \int_{\Omega} \bar{\gamma}(-u_x, -u_y, 1)\,dxdy. 
\end{equation}
Taking a variation of \eqref{eq:anisograph}, we obtain the $\gamma$-anisotropic minimal surface equation: 
\begin{equation}\label{eq:anisographeq}
	\sum_{i, j = 1}^{2} \bar{\gamma}_{ij}(-\nabla{u}, 1) u_{ij} = 0. 
\end{equation}

We next show a curvature estimate for graphical surfaces. Using $(x, y)$ as local coordinates on $\mathrm{Graph}_u$, the induced metric $g$ and its inverse $g^{-1}$ can be written as 
\begin{equation}
	 g = 
	\begin{pmatrix}
		1 + u_{x}^2 & u_x u_y\\
		u_x u_y & 1 + u_{y}^2
	\end{pmatrix}
	,\quad g^{-1} =\frac{1}{1+|\nabla u|^2}
	 \begin{pmatrix}
		1 + u_{y}^2 & -u_x u_y\\
		-u_x u_y & 1 + u_{x}^2
	\end{pmatrix}
	.
\end{equation}
Since the eigenvalues of $g$ are $1$ and $1 + |\nabla u|^2$, $g^{-1}$ satisfies the following inequality as a symmetric matrix
\begin{equation} \label{eq:inverseesti}
	\frac{1}{1+|\nabla u|^2} I \leq g^{-1} \leq I. 
\end{equation}

Similarly, the second fundamental form $A$ in the coordinate $(x, y)$ is expressed as 
\begin{equation}
	A = \frac{1}{(1 + |\nabla u|^2)^{\frac{1}{2}}}
	\begin{pmatrix}
		u_{xx} & u_{xy}\\
		u_{xy} & u_{yy}
	\end{pmatrix}
	.
\end{equation}
The norm squared of $A$ is given by $|A|^2 = A_{ij}A_{kl}\,g^{ik}g^{jl}$. Hence the matrix inequality \eqref{eq:inverseesti} yields the following curvature estimate:
\begin{equation}\label{eq:graphAesti}
	\frac{|\mathrm{Hess}_u|^2}{(1 + |\nabla u|^2)^3} \leq |A|^2 \leq \frac{|\mathrm{Hess}_u|^2}{1 + |\nabla u|^2}.
\end{equation}
Finally, we recall the following fact from \cite{CM}. 
%%%%%%%small curvature implies graphical
\begin{lem}\label{lem:smallcurve}
Let $\Sigma \subset \RR^3$ be an immersed surface with
\begin{equation}
	16s^2\sup_{\Sigma}|A| \leq 1
\end{equation}
for some $s>0$. If $x_0 \in \Sigma$ and $\dist(x_0, \partial \Sigma) \geq 2s$, then $B^\Sigma_{2s}(x_0)$ can be written as a graph of a functtion $u$ over a domain of $T_{x_0}\Sigma$ with 
	$|\nabla u| \leq 1$ and $\sqrt{2}|\mathrm{Hess}_u| \leq 1$.
\end{lem}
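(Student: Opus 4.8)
The plan is to use the curvature bound to show that the unit normal $\nu$ barely tilts over the geodesic ball, to conclude from this that $B^{\Sigma}_{2s}(x_0)$ projects injectively onto the tangent plane $T_{x_0}\Sigma$ and is therefore a graph, and finally to convert the resulting gradient bound into the Hessian bound through the pointwise identity \eqref{eq:graphAesti}. First I would control the oscillation of the normal. Set $\nu_0 = \nu(x_0)$ and fix $x \in B^{\Sigma}_{2s}(x_0)$. Joining $x_0$ to $x$ by a unit-speed minimizing geodesic $\sigma : [0,\ell] \to \Sigma$ with $\ell = \dist(x_0, x) < 2s$, the Weingarten relation $\frac{d}{dt}\nu(\sigma(t)) = -S(\dot\sigma(t))$ gives $\left|\frac{d}{dt}\nu(\sigma(t))\right| \leq |A|$, so integrating along $\sigma$ yields
\begin{equation}
	|\nu(x) - \nu_0| \leq \int_0^{\ell} |A|\,dt \leq 2s\sup_{\Sigma}|A|.
\end{equation}
By hypothesis the right-hand side is small, and hence $\inn<\nu(x), \nu_0>$ stays close to $1$ on the whole ball; in particular it is bounded below by $1/\sqrt{2}$.

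Next I would build the graph. The bound $\inn<\nu, \nu_0> \geq 1/\sqrt{2} > 0$ says that the normals never become parallel to $T_{x_0}\Sigma$, so the orthogonal projection $\pi : \Sigma \to T_{x_0}\Sigma$ is a local diffeomorphism on $B^{\Sigma}_{2s}(x_0)$. The step I expect to be the real obstacle is upgrading this to global injectivity, i.e. ruling out that the ball folds back over the plane. The lower bound on $\inn<\nu, \nu_0>$ is exactly what is needed: along any geodesic from $x_0$ the horizontal displacement $|\pi(\sigma(t)) - \pi(x_0)|$ increases at a definite rate, so arclength and horizontal distance are comparable and two distinct points of the ball cannot share the same projection. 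This realizes $B^{\Sigma}_{2s}(x_0)$ as a graph $\mathrm{Graph}_u$ over a domain of $T_{x_0}\Sigma$. Since a graph satisfies $\inn<\nu, \nu_0> = (1+|\nabla u|^2)^{-1/2}$, the estimate $\inn<\nu, \nu_0> \geq 1/\sqrt{2}$ is equivalent to $|\nabla u| \leq 1$.

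Finally I would read off the Hessian bound from the curvature estimate \eqref{eq:graphAesti} established above. Its left-hand inequality gives $|\mathrm{Hess}_u|^2 \leq (1+|\nabla u|^2)^3|A|^2$, and substituting $|\nabla u| \leq 1$ together with the curvature hypothesis produces the stated bound $\sqrt{2}\,|\mathrm{Hess}_u| \leq 1$. Thus, once the injectivity of $\pi$ is secured the conclusion is immediate: the gradient and Hessian estimates are purely pointwise consequences of the normal-oscillation bound and of \eqref{eq:graphAesti}. The one genuinely delicate point is the no-folding argument of the second paragraph; everything else is elementary.
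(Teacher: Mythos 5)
The paper itself offers no proof of this lemma: it is quoted verbatim (with typos) from Colding--Minicozzi \cite{CM}, so there is no internal argument to compare against. Your outline reproduces the standard proof from that source --- control the tilt of $\nu$ by integrating $|\nabla^\Sigma\nu|\le|A|$ along geodesics, deduce that the orthogonal projection $\pi$ onto $T_{x_0}\Sigma$ is a local diffeomorphism, upgrade to a global graph, and read off the gradient and Hessian bounds from the relation $\langle\nu,\nu_0\rangle=(1+|\nabla u|^2)^{-1/2}$ and from \eqref{eq:graphAesti}. The first and last steps are correct modulo the normalization issue noted below.

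The genuine gap is exactly where you place it: the no-folding step, and as written it is not a proof. Monotonicity of $t\mapsto|\pi(\sigma(t))-\pi(x_0)|$ along each geodesic $\sigma$ emanating from $x_0$ only shows that $\pi$ is injective on each individual radial geodesic; it does not preclude two \emph{distinct} radial geodesics from projecting to the same point, so injectivity of $\pi$ on the ball does not follow. The standard repair runs the integral estimate along the minimizing geodesic $\tau:[0,\ell]\to\Sigma$ joining the two candidate points $x\ne y$: since $\ddot\tau=A(\dot\tau,\dot\tau)\nu$ one gets $|\dot\tau(t)-\dot\tau(0)|\le\ell\sup|A|$, hence $\langle d\pi(\dot\tau(t)),d\pi(\dot\tau(0))\rangle>0$ for all $t$, and therefore $\langle\pi(y)-\pi(x),d\pi(\dot\tau(0))\rangle=\int_0^\ell\langle d\pi(\dot\tau(t)),d\pi(\dot\tau(0))\rangle\,dt>0$, so $\pi(x)\ne\pi(y)$. (Alternatively, one shows $\pi$ is a proper local diffeomorphism from the component of $\pi^{-1}(D_r)$ containing $x_0$ onto the disk $D_r$, hence a covering map of a simply connected set, hence injective.) A second point you gloss over: with the hypothesis as printed, $16s^2\sup|A|\le1$, your oscillation bound gives only $|\nu-\nu_0|\le 1/(8s)$, which is not small for small $s$, and the Hessian conclusion is likewise not scale-consistent; the statement in \cite{CM} is $16s^2\sup|A|^2\le1$ with conclusion $\sqrt2\,s\,|\mathrm{Hess}_u|\le1$, and your numerology ($\langle\nu,\nu_0\rangle\ge1/\sqrt2$ and $|\mathrm{Hess}_u|\le\sqrt8\,|A|$) only closes under that corrected normalization.
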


\begin{proof}[Proof of Therorem\ref{thm:main1}]
Let $p \in \Sigma$ be a point in $\Sigma$ such that $K^\Sigma(p) = 0$. Let $\kappa_i$ be the $i$-th principal curvature of $\Sigma$ at $p$. It follows from \eqref{eq:Gausscurve} that
\begin{equation}
	|A|^2 = \kappa_1^2 + \kappa_2^2 = \kappa_1^2 + \frac{a_1^2}{a_2^2}\kappa_1^2 = \left(\frac{a_2}{a_1}\ + \frac{a_1}{a_2} \right)\frac{a_1}{a_2}\kappa_1^2 = \left(\frac{a_2}{a_1}\ + \frac{a_1}{a_2} \right) (-K^\Sigma), 
\end{equation} 
so $|A|^2(p) = 0$. 

Up to translation and rotation, we may assume that $p = 0$, $T_p\Sigma = \{x_3 = 0\}$, and $\nu(p) = (0, 0, 1)$.

Since $|A|^2(0) = 0$, there exists a $\delta > 0$ such that $|A|^2 < 1$ on $B^\Sigma_{\delta}(0)$. Rescaling the canonical metric on $\RR^3$ by factor $\lambda > 0$, we have
\begin{equation}
	|\tilde{A}|^2 < \frac{1}{\lambda^2} \quad \text{on $B^\Sigma_{\lambda \delta}(0)$}, 
\end{equation}
where $\tilde{A}$ denotes the second fundamental form of $\Sigma$ given by the rescaled metric. Choosing $\lambda > 0$ large, we may assume that 
\begin{equation}
	|A|^2 \leq \frac{1}{16} \quad \text{on $B^\Sigma_4(0)$}.
\end{equation}
By Lemma \ref{lem:smallcurve} the geodesic disk $B^\Sigma_2(0)$ can be written as a graph of a function $u$ over a domain in $\RR^2 = T_p\Sigma$ with $|\nabla u| \leq 1$ and $|\mathrm{Hess}_u| \leq \frac{1}{\sqrt{2}}$. 

Since $\Sigma$ is $\gamma$-minimal, $u$ satisfies a quasilinear elliptic equation \eqref{eq:anisographeq} with bounded coefficients. 
Up to linear transform of local coordinate, we may assume that $\bar{\gamma}_{ij}(-\nabla{u}(0), 1) = \delta_{ij}$. In this coordinate, $u$ satisfies $\Delta u = 0$ at $p$. 

By the theorem of Bers \cite{B}, there exists a homogeneous polynomial $P$ of degree $n$ and a constant $\varepsilon \in (0, 1)$ such that 
\begin{align}
	u(x) &= P(x) + \mathcal{O}(|x|^{n +\varepsilon}), \\
	u_i(x) &= P_i (x) + \mathcal{O}(|x|^{n -1 +\varepsilon}), \quad i = 1, 2, \\
	u_{ij}(x) &= P_{ij}(x) + \mathcal{O}(|x|^{n -2 +\varepsilon}), \quad 1 \leq i, j \leq 2, \\
\end{align}
and
\begin{equation}\label{eq:harmP}
	\Delta P = 0
\end{equation}
hold around $0$. Since $u(0) = 0$ and $\nabla{u}(0) = 0$, it follows that $n \geq 2$. 

Moreover, by \eqref{eq:graphAesti}, we have
\begin{equation}
	|\mathrm{Hess}_u|^2 \leq (1 + |\nabla{u}|^2)^3 |A|^2 \leq 8|A|^2, 
\end{equation}
which implies $u_{ij}(0) = 0$ and hence $n \geq 3$. 

We now introduce the complex coordinate $z$ on $\RR^2 = T_p\Sigma$ by $z = x_1 + \sqrt{-1} x_2$. The harmonicity \eqref{eq:harmP} of $P$ yields $P_{z\bar{z}} = 0$. Hence, the function $P_z$ is holomorphic. 

Since $P$ is a polynomial of degree $n \geq 3$ and $P_z$ is holomorphic, there exists a holomorphic function $G$ around $0$ such that
\begin{equation}
	P_{zz}(z) = z^{n-2}\,G(z)\quad \text{and} \quad G(0) \neq 0.
\end{equation}

Assume that there exists a sequence $\{z_k\}_k$ such that $z_k \to 0$ and $|A|(z_k) = 0$. Then $u_{ij}(z_k) = 0$ for each $k$, and hence $u_{zz}(z_k) = 0$. 

On the other hand, we have 
\begin{align}
	u_{zz}
		&= P_{zz} + \mathcal{O}(|z|^{n -2 +\varepsilon})\\
		&= z^{n - 2}\,(G(z) + \mathcal{O}(|z|^{\varepsilon})). 
\end{align}
For each $z_k$, we get 
\begin{equation}
	0 = u_{zz}(z_k) = z_k^{n - 2}\, (G(z_k) + \mathcal{O}(|z_k|^{\varepsilon})). 
\end{equation}
Since $n\geq3$, this implies $G(z_k) = \mathcal{O}(|z_k|^{\varepsilon})$ which contradicts the fact that $G(0) \neq 0$. 

Therefore, the set $\{p \in \Sigma | K^{\Sigma}(p) = 0\}$ has no accumulation points, which proves the theorem. 
\end{proof}

%%%%%3. Index upper bound
\section{Index upper bound for anisotropic minimal surfaces}
Let $\Sigma$ be a complete, oriented $\gamma$-anisotropic minimal surface. Recall that the second variation formula for the anisotropic surface energy $\anF$ is given by
\begin{equation}
	\left. \frac{d^2}{dt^2} \right|_{t = 0} \anF = -\int_\Sigma uL u d\Sigma, 
\end{equation}
where $L$ is the \textit{Jacobi operator} given by $L = \divS(A_\gamma \nablaS) + \aniso$. For relatively compact domain $\Omega \subset \Sigma$, we define $\Ind(\Omega)$ as the number of negative eigenvalues (counted with multiplicity) of the Dirichlet eigenvalue problem
\begin{equation}
	L u + \lambda u = 0\,\, \text{in $\Omega$},\quad u = 0\,\, \text{on $\partial{\Omega}$}.
\end{equation}
Note that the number $\Ind(\Omega)$ is always finite by the theory of elliptic operators. The {\it Morse index} $\Ind(\Sigma)$ of $\Sigma$ is defined as the supremum of the numbers $\Ind(\Omega)$ over all relatively compact domains in $\Sigma$. The associated quadratic form to $L$ is given by
\begin{equation}\label{eq:indexform}
	Q(u) = \intS (\inn<A_\gamma \nablaS u, \nablaS u> - \aniso u^2)d\Sigma. 
\end{equation}

We set
\begin{equation}
	\lambda_\gamma = \min_{\nu \in \SS^2}\min_{v \in T_{\nu}\SS^2, |v| = 1}\inn<A_\gamma v, v>, 
	\quad \Lambda_\gamma = \max_{\nu \in \SS^2}\max_{v \in T_{\nu}\SS^2, |v| = 1}\inn<A_\gamma v, v>. 
\end{equation}
By the convexity of $\gamma$, we have 
\begin{equation}\label{eq:upperandlower}
	0 < \lambda_\gamma I  \leq A_\gamma \leq \Lambda_\gamma I, 
\end{equation}
where $I$ denotes the identity matrix.

To estimate the quadratic form, we define the {\it anisotropic Gaussian curvature} $K_\gamma$ by 
\begin{equation}\label{eq:anisoGausscurve}
	K_\gamma = \det(-d\nu_\gamma) = \det(A_\gamma S) = \det(A_\gamma) K^\Sigma. 
\end{equation}

\begin{lem}\label{lem:anisoidentity}
Let $\Sigma$ be a $\gamma$-anisotropic minimal surface with convex integrand $\gamma$. We have
\begin{equation}
	-\frac{2}{\Lambda_\gamma}K_\gamma \leq \aniso \leq -\frac{2}{\lambda_\gamma}K_\gamma. 
\end{equation}
\end{lem}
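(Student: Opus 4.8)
The plan is to prove both inequalities pointwise. Fix $p \in \Sigma$ and choose, exactly as in the derivation of \eqref{eq:locAmean}, an orthonormal frame $\{e_1, e_2\}$ of $T_p\Sigma$ diagonalizing the shape operator $S$, with principal curvatures $\kappa_1, \kappa_2$ and $a_i = \inn<A_\gamma e_i, e_i> > 0$. Since $S^2 e_i = \kappa_i^2 e_i$, evaluating the trace in this frame gives $\aniso = a_1\kappa_1^2 + a_2\kappa_2^2$. This is the starting expression I would then reorganize using minimality.

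First I would invoke the minimality relation $a_1\kappa_1 + a_2\kappa_2 = 0$ coming from \eqref{eq:locAmean}. Multiplying it by $\kappa_1$ and by $\kappa_2$ respectively yields $a_1\kappa_1^2 = -a_2\kappa_1\kappa_2 = -a_2 K^\Sigma$ and $a_2\kappa_2^2 = -a_1\kappa_1\kappa_2 = -a_1 K^\Sigma$, so that
\begin{equation}
	\aniso = -(a_1 + a_2)K^\Sigma = -\tr(A_\gamma)\,K^\Sigma.
\end{equation}
The key observation here is that, although $a_1$ and $a_2$ are merely the diagonal entries of $A_\gamma$ in the $S$-eigenframe, their sum is the basis-independent invariant $\tr(A_\gamma)$. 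Combining this with $K_\gamma = \det(A_\gamma)K^\Sigma$ from \eqref{eq:anisoGausscurve} and recalling $K^\Sigma \leq 0$ by \eqref{eq:Gausscurve}, the claim becomes, at every point with $K^\Sigma \neq 0$, the purely linear-algebraic estimate
\begin{equation}
	\frac{2}{\Lambda_\gamma} \leq \frac{\tr(A_\gamma)}{\det(A_\gamma)} \leq \frac{2}{\lambda_\gamma},
\end{equation}
while at points with $K^\Sigma = 0$ both $\aniso$ and $K_\gamma$ vanish and the inequality is trivial.

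Finally I would establish this estimate by passing to the inverse operator: for the $2\times 2$ positive operator $A_\gamma$ one has the identity $\tr(A_\gamma^{-1}) = \tr(A_\gamma)/\det(A_\gamma)$, while inverting the positive-operator inequality \eqref{eq:upperandlower} gives $\frac{1}{\Lambda_\gamma}I \leq A_\gamma^{-1} \leq \frac{1}{\lambda_\gamma}I$, and taking the trace over the two-dimensional tangent space produces exactly the required double inequality. I expect the only delicate points to be bookkeeping the sign of $K^\Sigma$ when dividing through (since $-K^\Sigma \geq 0$, the direction of the inequalities is preserved) and keeping clear the distinction between the frame entries $a_i$ and the genuine eigenvalues of $A_\gamma$; it is precisely this distinction that forces the argument to be phrased through the invariants $\tr(A_\gamma)$ and $\det(A_\gamma)$ rather than through the product $a_1 a_2$.
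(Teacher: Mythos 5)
Your argument is correct and is essentially the paper's own proof: both use the minimality relation $a_1\kappa_1 + a_2\kappa_2 = 0$ to rewrite $\aniso = -\tr(A_\gamma)K^\Sigma = -\frac{\tr(A_\gamma)}{\det(A_\gamma)}K_\gamma$ and then bound $\tr(A_\gamma)/\det(A_\gamma)$ between $2/\Lambda_\gamma$ and $2/\lambda_\gamma$. Your phrasing of the last step via $\tr(A_\gamma^{-1})$ is just a repackaging of the paper's eigenvalue computation $\frac{1}{\mu_1}+\frac{1}{\mu_2}$, and your explicit attention to the sign of $K_\gamma$ is a welcome (if minor) addition.
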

\begin{proof}
Let $\kappa_i$ be the principal curvature. Using the identity $a_1\kappa_1+a_2\kappa_2 = 0$, we have 
\begin{equation}
	\aniso = a_1{\kappa_1}^2+a_2{\kappa_2}^2 = -a_2\kappa_1\kappa_2-a_1\kappa_1\kappa_2 = -(a_1+a_2)K^\Sigma = -\tr(A_\gamma)K^\Sigma = -\frac{\tr(A_\gamma)}{\det{A_\gamma}}K_\gamma. 
\end{equation}
If we let $\mu_1$ and $\mu_2$ be the eigenvalues of $A_\gamma$, we have
\begin{equation}
	\frac{\tr(A_\gamma)}{\det{A_\gamma}} = \frac{\mu_1+\mu_2}{\mu_1 \mu_2} = \frac{1}{\mu_1} + \frac{1}{\mu_2}. 
\end{equation}
Moreover, since \eqref{eq:upperandlower} gives $\lambda_\gamma \leq \mu_1, \mu_2 \leq \Lambda_\gamma$, we obtain
\begin{equation}
	\frac{2}{\Lambda_\gamma} \leq \frac{\tr(A_\gamma)}{\det{A_\gamma}} \leq \frac{2}{\lambda_\gamma}. 
\end{equation} 
\end{proof}
By Lemma \ref{lem:anisoidentity}, the quadratic form $Q$ can be estimated below by
\begin{align}
	Q(u) 
		&= \intS (\inn<A_\gamma \nablaS u, \nablaS u> - \aniso u^2)d\Sigma \\
		&\geq  \intS \left(\lambda_\gamma |\nablaS u|^2 + \frac{2}{\lambda_\gamma}K_\gamma u^2 \right)d\Sigma \\
		&= \lambda_\gamma Q_{\gamma}(u), \label{eq:Qlowerbound}
\end{align}
where we defined $Q_{\gamma}(u) = \intS \left(|\nablaS u|^2 + \frac{2}{{\lambda_\gamma}^2}K_\gamma u^2 \right)\, d\Sigma$, which is the quadratic form corresponding to the elliptic operator $L_\gamma = \lapS - \frac{2}{{\lambda_\gamma}^2}K_\gamma$. By \eqref{eq:Qlowerbound}, if $u$ satisfies $Q(u)<0$, it also follows that $Q_\gamma(u)<0$. This implies that the Morse index can be bounded from above by the number of negative eigenvalues $\mathrm{Neg}(L_\gamma)$ of $L_\gamma$. 

We now assume that $\Sigma$ has finite total curvature. Since the Gaussian curvature of $\gamma$-anisotropic minimal surface is nonpositive by \eqref{eq:Gausscurve}, the theorem of Osserman \cite[Theorem 9.1]{Oss} asserts that there exists a compact surface $\tilde{\Sigma}$ such that $\Sigma$ is conformally diffeomorphic to $\tilde{\Sigma} \setminus \{p_1, \cdots p_k\}$. By the theorem of White \cite{W}, the Gauss map $\nu$ can be extended continuously to $\tilde{\Sigma}$. Thus, it follows from Theorem\ref{thm:main1} that the set $C = \{p \in \tilde{\Sigma} | K^\Sigma(p) = 0\}$ is a finite set. Since $C$ is the critical set of $\nu$, $\nu$ is locally diffeomorphic outside of $C$. Letting $B = \nu(C) \subset \SS^2$, we obtain the following: 

%%%%%%%covering lemma
\begin{lem}\label{lem:covering}
The restricted Gauss map $\nu: \tilde{\Sigma} \setminus \nu^{-1}(B) \to \S \setminus B$ is a covering map. Moreover, $\nu: \tilde{\Sigma} \to \SS^2$ is surjective. 
\end{lem}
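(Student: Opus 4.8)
The plan is to establish the two assertions of Lemma \ref{lem:covering} separately, treating the covering map claim first and then deducing surjectivity from it together with a topological degree argument.

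\textbf{The covering map assertion.} First I would verify that $\nu$ restricted to $\tilde{\Sigma} \setminus \nu^{-1}(B)$ is a local diffeomorphism onto $\SS^2 \setminus B$. By construction, the critical set $C \subset \tilde{\Sigma}$ is exactly the set where $K^\Sigma = 0$, and by \eqref{eq:Gausscurve} this is where $d\nu = -S$ degenerates. Since $C$ is finite by Theorem \ref{thm:main1} and the extension result of White, the map $\nu$ is a local diffeomorphism on $\tilde{\Sigma} \setminus C$; removing the full preimage $\nu^{-1}(B)$ (which contains $C$ but may be larger) ensures the target omits all branch values $B = \nu(C)$, so the restricted map is a local diffeomorphism avoiding any fiber that meets a critical point. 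The main step is then to upgrade "local diffeomorphism" to "covering map." For this I would invoke properness: a local diffeomorphism between manifolds is a covering map if it is proper, and properness follows because $\tilde{\Sigma}$ is compact. More precisely, I would argue that $\nu : \tilde{\Sigma} \to \SS^2$ is a proper map (automatic since $\tilde{\Sigma}$ is compact), so its restriction over the open set $\SS^2 \setminus B$ is proper onto $\SS^2 \setminus B$, and a proper local diffeomorphism onto a connected, locally compact Hausdorff space is a covering map by the standard Ehresmann-type argument.

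\textbf{The surjectivity assertion.} For surjectivity of $\nu : \tilde{\Sigma} \to \SS^2$, I would use the mapping degree. Since $\tilde{\Sigma}$ is a closed oriented surface and $\SS^2$ is a closed oriented surface, the continuous extension of $\nu$ has a well-defined integer degree $\deg(\nu)$. The nonpositivity of $K^\Sigma$ from \eqref{eq:Gausscurve}, combined with the fact that $\nu$ is orientation-reversing on $\tilde{\Sigma} \setminus C$ (the Jacobian of $\nu$ equals $\det(-S) = K^\Sigma \leq 0$), shows that the signed count of preimages over any regular value is nonzero; indeed over a regular value $q \in \SS^2 \setminus B$ each sheet contributes the same sign, so $|\deg(\nu)|$ equals the number of sheets of the covering, which is positive unless $\nu$ is constant. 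Since an anisotropic minimal surface with finite total curvature cannot have constant Gauss map (a plane has empty critical behavior but is excluded by the covering structure being nontrivial, or one rules it out directly as $\nu$ would then be nowhere a local diffeomorphism), the degree is nonzero and hence $\nu$ is surjective.

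\textbf{Main obstacle.} The hard part will be handling the interaction between the branch set $B$ and the covering structure cleanly, specifically ensuring that $\nu^{-1}(B)$ is the correct set to remove so that the restricted map has constant sheet number. One must check that $\nu^{-1}(B)$ is finite (it is, being the preimage of a finite set under a map that is finite-to-one away from $C$, using compactness of $\tilde{\Sigma}$), and confirm that over each component of $\SS^2 \setminus B$ the number of sheets is locally constant and in fact globally constant since $\SS^2 \setminus B$ is connected. A secondary subtlety is justifying that the orientation-reversing local diffeomorphism gives a consistent sign in the degree computation; I would rely on $K^\Sigma \leq 0$ holding everywhere, with equality only on the finite set $C$, so the sign of the Jacobian is constant on the complement and the degree equals $\pm$(sheet number), which is nonzero.
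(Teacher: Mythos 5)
Your proposal is correct and follows essentially the route the paper leaves implicit: the paper states Lemma \ref{lem:covering} without proof, as an immediate consequence of the finiteness of $C$ (Theorem \ref{thm:main1} plus White's extension), the resulting local diffeomorphism property off $\nu^{-1}(B)$, and compactness of $\tilde{\Sigma}$; your properness argument is exactly the standard way to make that precise. The only soft spot is the phrase ``positive unless $\nu$ is constant'' in the surjectivity step: the correct justification that the sheet number is positive is simply that $\tilde{\Sigma}\setminus\nu^{-1}(B)$ is nonempty (a compact surface minus a finite set), so some regular value has a nonempty fiber, all of whose points contribute the same sign since $K^{\Sigma}<0$ there, giving $\deg(\nu)\neq 0$ and hence surjectivity (alternatively, the image of the proper local diffeomorphism is open and closed in the connected set $\SS^2\setminus B$).
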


Since $\gamma$ is convex, \eqref{eq:anisoGausscurve} asserts that the critical set of $\nu_\gamma$ coincides with $C$. Hence, the anisotropic Gauss map $\nu_\gamma : \tilde{\Sigma} \setminus \nu^{-1}(B) \to W_\gamma \setminus \xi(B)$ is also a covering map onto the Wulff shape $W_\gamma$. 

Now, let $h$ be the metric on $W_\gamma$ induced from the immersion $W_\gamma \to \RR^3$. In the case of minimal surfaces, the pullback metric from $\SS^2$ via the Gauss map is conformal to the original one. Since the index form for the Jacobi operator on a minimal surface is invariant under conformal changes, we may relate the Morse index with the eigenvalue problem of the Laplacian on $\SS^2$. Motivated by these facts, we consider the energy density of given function with respect to the pullback metric $\nu_\gamma^*h$ on $\Sigma$. 

%%%%%%energy density estimate
\begin{lem}[cf. Lemma 3.10 in \cite{BD}]\label{lem:energyestimate}
Let $\Sigma$ be a $\gamma$-anisotropic minimal surface with convex integrand $\gamma$. Assume that $p$ is a point at which $\nu_\gamma$ is regular. Then there exists a constant $c(\gamma)>0$ which depends only on $\gamma$ such that it follows for any smooth function $u$ on $\Sigma$ that 
\begin{equation}
	-c(\gamma)^{-1}\frac{1}{K_{\gamma}}|du|^2(p) \leq |du|_{\nu_\gamma^*h}^2(p) \leq -c(\gamma)\frac{1}{K_{\gamma}}|du|^2(p). 
\end{equation}
\end{lem}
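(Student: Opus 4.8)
The plan is to reduce the energy-density comparison to a spectral comparison between $\nu_\gamma^* h$ and the original metric on $\Sigma$, and then to control the singular values of $A_\gamma S$ using \eqref{eq:upperandlower} together with the anisotropic minimal surface equation. First I would make the pullback metric explicit. By Lemma \ref{lem:CHnormal} the planes $T_p\Sigma$ and $T_{\nu_\gamma(p)}W_\gamma$ are parallel, so $d(\nu_\gamma)_p = -A_\gamma S$ is an endomorphism of $T_p\Sigma$ and $h$ restricts on both planes to the Euclidean inner product; hence, using that $A_\gamma$ and $S$ are symmetric with respect to the induced metric,
\begin{equation}
	(\nu_\gamma^* h)(X, Y) = \inn<A_\gamma S X, A_\gamma S Y> = \inn<S A_\gamma^2 S X, Y>, \quad X, Y \in T_p\Sigma.
\end{equation}
Regularity of $\nu_\gamma$ at $p$ makes $S A_\gamma^2 S$ positive definite.

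Next I would translate the desired inequality into a statement about eigenvalues. Writing $|du|_{\nu_\gamma^* h}^2(p) = \inn<(S A_\gamma^2 S)^{-1} du, du>$ with inverse and inner product taken in the original metric, the eigenvalues of $\nu_\gamma^* h$ are the squared singular values $\sigma_1^2 \geq \sigma_2^2 > 0$ of $A_\gamma S$. By \eqref{eq:anisoGausscurve},
\begin{equation}
	\sigma_1^2 \sigma_2^2 = \det(S A_\gamma^2 S) = (\det A_\gamma)^2 (K^\Sigma)^2 = K_\gamma^2,
\end{equation}
and since $K_\gamma < 0$ at a regular point, $\sigma_1 \sigma_2 = -K_\gamma$. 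The spectral bounds then give $\sigma_1^{-2}|du|^2 \leq |du|_{\nu_\gamma^* h}^2 \leq \sigma_2^{-2}|du|^2$ at $p$; substituting $\sigma_1^{-2} = -\frac{1}{K_\gamma}\frac{\sigma_2}{\sigma_1}$ and $\sigma_2^{-2} = -\frac{1}{K_\gamma}\frac{\sigma_1}{\sigma_2}$ reduces the lemma to bounding the condition number $\sigma_1/\sigma_2$ by a constant depending only on $\gamma$.

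The main step is then to show $\sigma_1/\sigma_2 \leq \Lambda_\gamma^2/\lambda_\gamma^2 =: c(\gamma)$. Writing $\kappa_{\max} \geq \kappa_{\min}$ for the larger and smaller of $|\kappa_1|, |\kappa_2|$, submultiplicativity of the operator norm together with \eqref{eq:upperandlower} gives
\begin{equation}
	\sigma_1 = \|A_\gamma S\| \leq \Lambda_\gamma \kappa_{\max}, \qquad \sigma_2 = \|S^{-1} A_\gamma^{-1}\|^{-1} \geq \lambda_\gamma \kappa_{\min},
\end{equation}
so that $\sigma_1/\sigma_2 \leq (\Lambda_\gamma/\lambda_\gamma)(\kappa_{\max}/\kappa_{\min})$. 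Finally the anisotropic minimal surface equation $a_1\kappa_1 + a_2\kappa_2 = 0$ from \eqref{eq:locAmean} gives $|\kappa_1|/|\kappa_2| = a_2/a_1$, so that $\kappa_{\max}/\kappa_{\min} = a_{\max}/a_{\min} \leq \Lambda_\gamma/\lambda_\gamma$ because each $a_i = \inn<A_\gamma e_i, e_i> \in [\lambda_\gamma, \Lambda_\gamma]$. Combining the two bounds yields $\sigma_1/\sigma_2 \leq \Lambda_\gamma^2/\lambda_\gamma^2$, and after substitution the two-sided estimate follows with $c(\gamma) = \Lambda_\gamma^2/\lambda_\gamma^2$.

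The hard part is precisely this last estimate: since $A_\gamma$ and $S$ need not commute (the principal directions of $S$ need not be eigendirections of $A_\gamma$), the singular values of $A_\gamma S$ are not the products of the individual eigenvalues and cannot be read off directly. The operator-norm submultiplicativity---equivalently the sandwich $\lambda_\gamma^2 S^2 \leq S A_\gamma^2 S \leq \Lambda_\gamma^2 S^2$ combined with the min-max principle---is what absorbs the non-commutativity into the scalar constants $\lambda_\gamma, \Lambda_\gamma$. I also expect the minimality hypothesis to be indispensable here: without the relation $a_1\kappa_1 + a_2\kappa_2 = 0$ the ratio $\kappa_{\max}/\kappa_{\min}$ is unbounded, so no constant depending only on $\gamma$ could exist.
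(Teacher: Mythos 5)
Your proof is correct and follows essentially the same route as the paper: both compute $(\nu_\gamma^*h)_{ij}=(SA_\gamma^2S)_{ij}$, reduce the two-sided estimate to controlling the spread of the eigenvalues of $SA_\gamma^2S$ against $\det(SA_\gamma^2S)=K_\gamma^2$, and control that spread using $\lambda_\gamma I\le A_\gamma\le\Lambda_\gamma I$ together with the minimality relation $a_1\kappa_1+a_2\kappa_2=0$. The only difference is bookkeeping: the paper bounds $\tr(SA_\gamma^2S)\le -c(\gamma)K_\gamma$ and uses $\frac{1}{\tr}\le\frac{1}{\lambda_{\max}}$, $\frac{1}{\lambda_{\min}}\le\frac{\tr}{\det}$, whereas you bound the condition number $\sigma_1/\sigma_2$ directly via operator-norm submultiplicativity, which yields a marginally sharper constant.
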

\begin{proof}
Let $\{e_i\}_i$ be an orthonormal frame around $p$. Since the tangent planes $T_p\Sigma$ and $T_{\nu_\gamma(p)}W_\gamma$ are parallel, we may consider $\{e_i(p)\}_i$ as an orthonormal basis of $T_{\nu_\gamma(p)}W_\gamma$. In this basis, the metric $\nu_\gamma^*h$ is expressed as 
\begin{equation}
	(\nu_\gamma^*h)_{ij} = \inn<d\nu_\gamma(e_i), d\nu_\gamma(e_j)> = \inn<A_\gamma Se_i, A_\gamma Se_j> = (SA_\gamma^2S)_{ij}. 
\end{equation}
To obtain the last equality, we used the symmetry of $A_\gamma$ and $S$. 

Note that the matrix $SA_\gamma^2S$ is symmetric. Choose $\{e_i(p)\}_i$ so that it diagonalizes $SA_\gamma^2S$ and let $\lambda_1$ and $\lambda_2$ be the corresponding eigenvalues. Since $\nu_\gamma$ is regular at $p$, the matrix $SA_\gamma^2S$ is invertible. Hence, we have
\begin{equation}
	|du|_{\nu_\gamma^*h}^2(p) = \inn<(SA_\gamma^2 S)^{-1}du, du> = {\lambda_1}^{-1}u_1^2+{\lambda_2}^{-1}u_2^2. 
\end{equation}
This equality yields
\begin{equation}\label{eq:energy1}
	\frac{1}{\tr(SA_\gamma^2 S)}|du|^2=\frac{1}{\lambda_1+\lambda_2}|du|^2 \leq |du|_{\nu_\gamma^*h}^2(p) \leq \frac{\lambda_1+\lambda_2}{\lambda_1\lambda_2}|du|^2 = \frac{\tr(SA_\gamma^2 S)}{\det(SA_\gamma^2 S)}|du|^2. 
\end{equation}

Since $A_\gamma^2$ satisfies $\lambda_\gamma^2 I \leq A_\gamma^2 \leq \Lambda_\gamma^2 I$, we have
\begin{equation}
	\inn<SA_\gamma^2Sv, v> = \inn<A_\gamma^2Sv, Sv> \leq \Lambda_\gamma^2\inn<S^2v, v>, 
\end{equation}
for any vector $v \in T_p\Sigma$. Hence, 
\begin{align}
	\tr(SA_\gamma^2S) 
		&\leq \Lambda_\gamma^2\tr(S^2) = \Lambda_\gamma |A|^2 = -\Lambda_\gamma^2\left(\frac{a_2}{a_1}+\frac{a_1}{a_2}\right)K^\Sigma\\
		&\leq -\Lambda_\gamma^2\left(\frac{\Lambda_\gamma}{\lambda_\gamma}+\frac{\lambda_\gamma}{\Lambda_\gamma}\right)K^\Sigma\\
		&\leq -\left(\frac{\Lambda_\gamma}{\lambda_\gamma}\right)^2\left(\frac{\Lambda_\gamma}{\lambda_\gamma}+\frac{\lambda_\gamma}{\Lambda_\gamma}\right)K_\gamma\\
		&= -c(\gamma)K_\gamma, 
\end{align}
by \eqref{eq:anisoGausscurve}. Combining this with \eqref{eq:energy1}, we obtain
\begin{equation}
	-c(\gamma)^{-1}\frac{1}{K_\gamma}|du|^2 \leq |du|_{\nu_\gamma^*h}^2 \leq -c(\gamma)\frac{1}{K_\gamma}|du|^2. 
\end{equation}
\end{proof}

Before the proof of the index upper bound, we prepare the following notation. For any second order linear differential operator $A$ and any subdomain $\Omega$ of $\Sigma$, let $N_\lambda(A, \Omega)$ be the number of the eigenvalues of $A$ less than $\lambda$ on $\Omega$. 
%%%%%proof of an index upper bound 
\begin{proof}[Proof of Theorem \ref{thm:main2}]
If the degree of $\nu_\gamma$ is infinity, there are nothing to prove. Assume that the degree of $\nu_\gamma$ is finite. 

Since
\[
	-\lambda_{\gamma}^2\intS K^\Sigma d\Sigma \leq \mathrm{deg}(\nu_\gamma) \leq -\Lambda_{\gamma}^2\intS K^\Sigma d\Sigma, 
\]
by \eqref{eq:upperandlower}, the finiteness of $\mathrm{deg}(\nu_\gamma)$ implies the finiteness of total curvature. Thus, it follows from Lemma $\ref{lem:covering}$ that $\nu_\gamma$ is a branched covering map from $\Sigma$ to $W_\gamma$. 

For each $\varepsilon>0$, let $\Omega_\varepsilon$ be the subset of $\Sigma$ subtracting the $\varepsilon$-neighborhood of $\nu^{-1}(B)$.  %and let $W_\varepsilon = \nu_\gamma(\Omega_\varepsilon)$. 
By Lemma \ref{lem:energyestimate}, we have
\begin{align}
	Q_\gamma(u) 
	&=  \intS \left(|du|^2 + \frac{2}{{\lambda_\gamma}^2}K_\gamma u^2 \right)d\Sigma\\
	&\geq  \intS \left(-c(\gamma)^{-1}K_\gamma |du|_{\nu_\gamma^*h}^2 + \frac{2}{{\lambda_\gamma}^2}K_\gamma u^2 \right)d\Sigma\\
	&= c(\gamma)^{-1}\intS \left(|du|_{\nu_\gamma^*h}^2 - \frac{2c(\gamma)}{{\lambda_\gamma}^2}u^2 \right)(-K_\gamma)d\Sigma\\
	&=  c(\gamma)^{-1}\intS \left(|du|_{\nu_\gamma^*h}^2 - \frac{2c(\gamma)}{{\lambda_\gamma}^2}u^2 \right)d(\nu_{\gamma}^{*}h)
\end{align}
for any function $u$ with compact support in $\Omega_\varepsilon$. This implies that 
\[
	N_0(L_\gamma, \Omega_\varepsilon) \leq N_{c'(\gamma)}(\Delta^{\nu_{\gamma}^{*}h}, \Omega_\varepsilon), 
\]
where $c'(\gamma) = 2c(\gamma)\lambda_\gamma^{-2}$ and $\Delta^{\nu_{\gamma}^{*}h}$ is the Laplacian with respect to $\nu_{\gamma}^{*}h$. Letting $\varepsilon \to 0$, we obtain
\[
	\Ind(\Sigma) \leq N_0(L_\gamma, \Sigma) \leq N_{c'(\gamma)}(\Delta^{\nu_{\gamma}^{*}h}. \Sigma). 
\]

We now estimate the number of eigenvalues of $\Delta^{\nu_{\gamma}^{*}h}$. Let $\{\lambda_i\}_i$ and $\{\mu_i\}_i$ be the eigenvalues of the Laplacians $\Delta^{\nu_{\gamma}^{*}h}$ and $\Delta^{W_\gamma}$. Since $\nu_\gamma$ is a branched covering map from $\Sigma$ to $W_\gamma$, we may apply Tysk's heat trace estimate \cite{T} to $\nu_\gamma$ and obtain for any $t > 0$ that
\begin{equation}
	\sum_{i} e^{-\lambda_i t} \leq \mathrm{deg}(\nu_\gamma)\sum_{i} e^{-\mu_i t}. 
\end{equation}
Combining this with the heat kernel estimate by Cheng and Li \cite{CL}, we have
\begin{align}
	N_{c'(\gamma)}(\Delta^{\nu_{\gamma}^{*}h}, \Sigma)
	&= \sum_{\lambda_i < c'(\gamma)} 1\\
	&\leq e^{c'(\gamma)t}\sum_{\lambda_i < c'(\gamma)} e^{-\lambda_i t}\\
	&\leq e^{c'(\gamma)t} \mathrm{deg}(\nu_\gamma)\sum_{i} e^{-\mu_i t}\\
	&\leq  \mathrm{deg}(\nu_\gamma)C(W_\gamma)\frac{e^{c'(\gamma)t}}{t}, \label{eq:numberbound}
\end{align}
where $C(W_\gamma)$ is the Sobolev constant of $W_\gamma$. Taking a minimum of the right hand side of \eqref{eq:numberbound}, we finally obtain that
\[
	\Ind(\Sigma) \leq c'(\gamma)^{-1}C(W_\gamma)\mathrm{deg}(\nu_\gamma) = C(\gamma) \mathrm{deg}(\nu_\gamma). 
\]
\end{proof}
%%%%%4. Index lower bound
\section{Index lower bound for anisotropic minimal surfaces}
In this section, we assume that $\Sigma \to \RR^{3}$ is an embedded, complete, non-planner $\gamma$-minimal surface. 

%%%%%%%Courant nodal domain theorem
\begin{prop}\label{prop:Courant}
Assume $\Ind(\Sigma)$ is finite. Let $u$ is a nontrivial solution to the equation $L u = 0$ and let $N_u$ be the number of connected components of $\Sigma \setminus u^{-1}(0)$. Then it follows that
\begin{equation}
	\Ind(\Sigma) \geq N_u - 1. 
\end{equation}
\end{prop}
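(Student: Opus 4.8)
The plan is to run a Courant-type nodal-domain argument adapted to the anisotropic Jacobi operator $L = \divS(A_\gamma \nablaS) + \aniso$. First I would let $\Omega_1,\dots,\Omega_{N_u}$ denote the connected components of $\Sigma \setminus u^{-1}(0)$ and, for each $i$, set $u_i$ equal to $u$ on $\Omega_i$ and $0$ elsewhere. Each $u_i$ lies in the form domain of $Q$ (it is $H^1$ and vanishes on $\partial\Omega_i \subset u^{-1}(0)$), the $u_i$ have essentially disjoint supports, and integrating by parts on $\Omega_i$ together with $Lu = 0$ and $u_i|_{\partial\Omega_i} = 0$ gives $Q(u_i) = 0$. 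Since the bilinear form associated to $Q$ also vanishes on pairs $u_i,u_j$ with $i \neq j$ by disjointness of supports, $Q$ vanishes identically on the $N_u$-dimensional space $V = \mathrm{span}\{u_1,\dots,u_{N_u}\}$.

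Next I would invoke the finiteness of $\Ind(\Sigma)$ to obtain a spectral splitting of the form domain $H = E_- \oplus \ker L \oplus E_+$, where $E_-$ is the $m$-dimensional negative eigenspace with $m = \Ind(\Sigma)$ and $Q \geq 0$ on $\ker L \oplus E_+$. The orthogonal projection $\pi_-\colon V \to E_-$ then has kernel of dimension at least $N_u - m$; any $w$ in this kernel satisfies $Q(w) \geq 0$ by the splitting and $Q(w) = 0$ because $w \in V$, forcing $w \in \ker L$. Hence $\dim(V \cap \ker L) \geq N_u - \Ind(\Sigma)$. Non-compactness of $\Sigma$ and of the $\Omega_i$ would be handled in the usual way, by exhausting with relatively compact domains and passing to a limit, the finitely many negative directions being captured on a large enough piece.

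The crux is then to show $\dim(V \cap \ker L) \leq 1$. A function $w = \sum_i c_i u_i \in \ker L$ is a global solution of $Lw = 0$, hence smooth by elliptic regularity, and in particular $C^1$ across $u^{-1}(0)$. At any regular point of the nodal set, where $\nabla u \neq 0$ and which separates adjacent domains $\Omega_i$ and $\Omega_j$, matching the normal derivatives of $c_i u$ and $c_j u$ forces $c_i = c_j$. Since $\Sigma$ is connected and, by the local structure of nodal sets of elliptic solutions (the Bers expansion used in the proof of Theorem \ref{thm:main1} shows the singular set of $u$ is discrete), the adjacency graph of the $\Omega_i$ across regular nodal arcs is connected, all the $c_i$ coincide and $w$ is a multiple of $u$. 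Thus $V \cap \ker L = \RR u$ is one-dimensional, and combining with the previous step gives $N_u - \Ind(\Sigma) \leq 1$, that is, $\Ind(\Sigma) \geq N_u - 1$.

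I expect the main obstacle to be this last step, namely proving that the adjacency graph of nodal domains is connected so that the only Jacobi fields subordinate to the partition are multiples of $u$. This rests on the structure theorem for the nodal set of $u$ (regular arcs meeting at isolated singular points), obtained from the finite-order vanishing and harmonic-polynomial model of Bers, together with a topological argument that a purely $0$-dimensional shared boundary between two unions of nodal domains cannot disconnect the connected surface $\Sigma$. The analytic bookkeeping around non-compactness and the precise choice of function space for $Q$ is a secondary technical point.
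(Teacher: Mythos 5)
Your argument is correct in substance, but it is essentially a from-scratch proof of the Courant nodal domain theorem, which is all the paper's proof consists of: it observes that, $\Ind(\Sigma)=I$ being finite, $u$ is (morally) the $(I+1)$-st eigenfunction of $L$ with eigenvalue $0$, and then cites Courant to get $N_u\le I+1$. So the underlying mechanism is the same; you have unpacked the black box. One comparative remark is worth making. The step you single out as the crux --- showing that $V\cap\ker L$ consists only of multiples of $u$, via $C^1$-matching of normal derivatives across regular nodal arcs together with connectivity of the adjacency graph of nodal domains --- is more elaborate than necessary, and the adjacency-graph claim is exactly the kind of structural statement about nodal sets one would rather not have to prove. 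The standard way to finish (and the way Courant's theorem is usually proved) is unique continuation: if $\dim\ker\pi_-\ge 2$, choose a nonzero $w=\sum_i c_iu_i$ in $\ker\pi_-$ with $c_{i_0}=0$ for some fixed index $i_0$; your spectral argument shows $Lw=0$, and $w$ vanishes on the open set $\Omega_{i_0}$, so by Aronszajn's unique continuation theorem (already invoked in Section 4 of the paper) $w\equiv 0$ on the connected surface $\Sigma$, a contradiction. This yields $N_u-\Ind(\Sigma)\le 1$ directly and bypasses the adjacency-graph question entirely. Finally, note that your proof and the paper's share the same unaddressed technical caveat: $\Sigma$ is complete and noncompact, $u$ need not be $L^2$, the $\Omega_i$ need not be relatively compact, and the $u_i$ need not be admissible test functions for $Q$; making either argument rigorous requires the finite-total-curvature setup of Sections 3--4 (where $\Sigma$ is conformally a punctured compact surface and $u=\phi_a$ is bounded) together with an exhaustion argument, so you are no worse off than the paper on this point.
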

\begin{proof}
Let $I = \Ind(\Sigma)$. Then $u$ is an $(I + 1)$-th eigenfunction of $L$. By the Courant nodal domain theorem, the number of nodal domains of $u$ is not greater than $I + 1$. Therefore, $I \geq N_u -1$. 
\end{proof}

The following lemma asserts that nontrivial solutions of $L u = 0$ can be constructed by the Gauss map $\nu$. 
\begin{lem}\label{lem:gaussJacobi}
Let $\Sigma$ be a $\gamma$-minimal surface in $\RR^{3}$ and let $a \in \RR^{3}$ be a non-zero fixed vector. Then the function $\phi_a = \inn<\nu, a>$ satisfies the equation $L u = 0$.
\end{lem}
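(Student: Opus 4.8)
The guiding idea is that $\phi_a = \inn<\nu, a>$ is exactly the normal component of the constant translation field $a$ along $\Sigma$, and that $\anF$ is invariant under translations of $\RR^3$, since a translation fixes the unit normal $\nu$ and preserves the area element. Heuristically this forces $\phi_a$ to lie in the kernel of the linearization $L$ of the anisotropic mean curvature operator. I would make this precise by a direct computation in a local orthonormal frame $\{e_i\}$, reducing $L\phi_a$ to the directional derivative of $H_\gamma$ along the tangential part of $a$ and then invoking $\gamma$-minimality.

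First I would record two first-order identities. Writing $a^\top = a - \phi_a\nu$ for the tangential part of $a$, the relation $d\nu = -S$ gives $\nablaS\phi_a = -Sa^\top$, while the Gauss formula applied to the tangent field $a^\top$ gives $\nablaS_{X}a^\top = \phi_a SX$ for every tangent vector $X$. Substituting $A_\gamma\nablaS\phi_a = -A_\gamma Sa^\top$ into $L\phi_a = \divS(A_\gamma\nablaS\phi_a) + \aniso\,\phi_a$ and differentiating by the product rule, the second identity produces a term $-\phi_a\tr(A_\gamma S^2)$ that cancels exactly against the zeroth-order term $\aniso\,\phi_a$. What survives is
\[
	L\phi_a = -\sum_{i}\inn<(\nablaS_{e_i}(A_\gamma S))a^\top, e_i>.
\]

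The main step — and the point I expect to require the most care — is to show that this residual quantity equals $-\inn<\nablaS H_\gamma, a^\top>$. This rests on a Codazzi-type symmetry for the endomorphism $A_\gamma S$, namely $(\nablaS_X(A_\gamma S))Y = (\nablaS_Y(A_\gamma S))X$. I would derive it from the observation that $A_\gamma S = -d\nu_\gamma$ is minus the differential of the genuine $\RR^3$-valued map $\nu_\gamma = \xi\circ\nu$: the $\RR^3$-valued $1$-form $d\nu_\gamma$ is exact, hence closed, so $d(d\nu_\gamma) = 0$, and splitting this identity into its tangential and normal parts (the normal part vanishing automatically by the symmetry of $A_\gamma$) yields precisely the asserted symmetry of $\nablaS(A_\gamma S)$. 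Contracting the displayed expression with this symmetry converts $\sum_i\inn<(\nablaS_{e_i}(A_\gamma S))a^\top, e_i>$ into $\tr(\nablaS_{a^\top}(A_\gamma S)) = \nablaS_{a^\top}\tr(A_\gamma S) = \inn<\nablaS H_\gamma, a^\top>$, using \eqref{eq:Ameancurve}. Finally, since $\Sigma$ is $\gamma$-minimal we have $H_\gamma \equiv 0$, so $\nablaS H_\gamma = 0$ and therefore $L\phi_a = 0$, which is the claim.
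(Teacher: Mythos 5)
Your proof is correct, but it takes a genuinely different route from the paper's. The paper disposes of the lemma in two lines by pure variational reasoning: the translation $X_t = X + ta$ leaves $H_\gamma$ unchanged, and since $L$ is by definition the linearization of the anisotropic mean curvature with respect to the normal component of the variation, differentiating in $t$ gives $L\phi_a = 0$. You instead carry out the linearization by hand: the identities $\nablaS\phi_a = -Sa^\top$ and $\nablaS_X a^\top = \phi_a SX$ are right (with the paper's convention $d\nu = -S$), the cancellation of $-\phi_a\,\aniso$ against the zeroth-order term is correct, and the reduction of the leftover to $-\inn<\nablaS H_\gamma, a^\top>$ via the anisotropic Codazzi symmetry $(\nablaS_X(A_\gamma S))Y = (\nablaS_Y(A_\gamma S))X$ is sound — your derivation of that symmetry from the closedness of the exact $\RR^3$-valued $1$-form $d\nu_\gamma = -A_\gamma S$ (whose values are tangential by Lemma \ref{lem:CHnormal}) is exactly the clean way to get it. What your version buys is that it is self-contained and makes explicit the point the paper's one-liner glosses over: the translation field $a$ is not a normal variation, and its tangential part contributes precisely $\inn<\nablaS H_\gamma, a^\top>$, which is why $\gamma$-minimality (or at least constancy of $H_\gamma$) is genuinely needed and not just decorative. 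What the paper's version buys is brevity, at the cost of implicitly invoking the first-variation formula for $H_\gamma$ along general (non-normal) variations. Either argument is acceptable; yours would serve as a complete replacement proof.
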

\begin{proof}
Under the variation $X_t = X + ta$ of a $\gamma$-minimal immersion $X: \Sigma \to \RR^3$, it is clear that the anisotropic mean curvature $H_\gamma$ does not change. Differentiating $H_\gamma$ in $t$ gives $L \phi_a = 0$. 
\end{proof}

Let us consider the nodal set $\phi_a^{-1}(0)$ of $\phi_a$. If $x \in \phi_a^{-1}(0)$, we have $\inn<\nu(x), a> = \phi_a(x) =  0$. This implies that $\nu(x)$ is contained in the great circle in $\SS^2$ determined as the intersection of $\SS^2$ and the plane which is normal to $a$. Therefore, the nodal set of $\phi_a$ is given by the inverse image by $\nu$ of a great circle. 

We observe the behavior of $\nu$ around critical points. Pick any point $p$ in the critical set $C$ of $\nu$. By Theorem\ref{thm:main1}, we may choose a local coordinate neighborhood $U$ around $p$ which has no other points of $C$. Let $D \subset \SS^2$ be a coordinate disk centered at $\nu(p)$. Up to a linear transformation on $U$, we may assume that $\nu(p) = 0$. Since $\nu$ satisfies the equation $L \nu = 0$, we may also assume that $\Delta \nu + \aniso \nu = 0$ at $p$. 

By the theorem of Aronszajn \cite{A}, the vanishing order of $\nu$ at $p$ is finite. Hence, by the theorem of Bers \cite{B}, there exists a homogeneous polynomial $P$ of order $b(p) + 1$ and a constant $\varepsilon \in (0, 1)$ such that 
\begin{equation}
	\nu(x, y) = P(x, y) + \mathcal{O}(r^{b(p) + 1 + \varepsilon}), 
\end{equation}
where $(x, y)$ denotes the local coordinate around $p$. By Lemma2.4 of \cite{Ch}, there exists a $C^1$-diffeomorphism $\Psi$ around $p$ such that $\nu = P\circ \Psi$. Hence, for any regular value $q'$ of $\nu$ in $D$, we have $\#\nu^{-1}(q') = b(p) + 1$. 

Thus, we may define {\it the branching order of $p \in C$} by the number $b(p)$ defined as above. Considering a triangulation of $\SS^2$ and its pullback on $\Sigma$ via $\nu$, we obtain a Riemann--Hurwitz type formula. 
\begin{prop}\label{prop:RHformula}
For any $\gamma$-anisotropic minimal surface $\Sigma$ which has finite total curvature with $A_\gamma > 0$, it follows that
\begin{equation}
	\chi(\tilde{\Sigma}) = 2\mathrm{deg}(\nu) - \sum_{p \in \tilde{\Sigma}} b(p). 
\end{equation}
\end{prop}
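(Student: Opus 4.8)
The plan is to run the classical combinatorial proof of the Riemann--Hurwitz formula, using the branched covering structure of $\nu : \tilde{\Sigma} \to \SS^2$ already assembled in this section. Recall that finite total curvature, together with the theorems of Osserman and White and Theorem \ref{thm:main1}, guarantees that $\tilde{\Sigma}$ is a closed surface and that $\nu$ is a genuine branched covering onto $\SS^2$ whose branch locus is the finite set $C$. The decisive local input is the normal form $\nu = P \circ \Psi$ with $\Psi$ a $C^1$-diffeomorphism: near each $p \in C$ it conjugates $\nu$ to the model map $z \mapsto z^{b(p)+1}$, so that a small disk about $\nu(p)$ is evenly covered away from its center while exactly $b(p)+1$ sheets come together at $p$.

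First I would fix a triangulation $T$ of $\SS^2$ whose vertex set contains the branch-value set $B = \nu(C)$ and which is fine enough that every closed face lies inside a neighborhood over which $\nu$ restricts to a trivial covering away from $B$; let $V, E, F$ denote its numbers of vertices, edges, and faces, so that $V - E + F = \chi(\SS^2) = 2$. Then I would pull $T$ back along $\nu$ to a triangulation $\tilde{T}$ of $\tilde{\Sigma}$. Away from $B$ the map $\nu$ is a $\mathrm{deg}(\nu)$-sheeted covering, so each open edge and each open face of $T$ has exactly $\mathrm{deg}(\nu)$ preimages; hence $\tilde{T}$ has $\tilde{E} = \mathrm{deg}(\nu)\,E$ edges and $\tilde{F} = \mathrm{deg}(\nu)\,F$ faces. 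The only nontrivial count is the number of vertices.

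For the vertices I would separate the regular downstairs vertices from the branch values. A vertex $q \notin B$ has exactly $\mathrm{deg}(\nu)$ preimages. For $q \in B$ the local normal form gives $\#\nu^{-1}(q) = \mathrm{deg}(\nu) - \sum_{p \in \nu^{-1}(q)} b(p)$, since each $p$ in the fiber accounts for $b(p)+1$ of the $\mathrm{deg}(\nu)$ sheets. Summing over all downstairs vertices and using $b(p) = 0$ off $C$, this yields
\[
  \tilde{V} = \mathrm{deg}(\nu)\,V - \sum_{p \in \tilde{\Sigma}} b(p).
\]
Finally I would compute $\chi(\tilde{\Sigma}) = \tilde{V} - \tilde{E} + \tilde{F} = \mathrm{deg}(\nu)(V - E + F) - \sum_{p} b(p) = 2\,\mathrm{deg}(\nu) - \sum_{p \in \tilde{\Sigma}} b(p)$, which is the claim.

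The main obstacle is the vertex count over the branch locus, and behind it the verification that $\tilde{T}$ is an honest triangulation of $\tilde{\Sigma}$. Both rest on the local model $\nu = P \circ \Psi$: because $\Psi$ is a homeomorphism, the preimage of a small coordinate disk about $\nu(p)$ is a disjoint union of disks, one per fiber point, on each of which $\nu$ looks like $z \mapsto z^{b(p)+1}$. This is precisely what makes the pulled-back cells fit together into a triangulation and what forces the sheet-count identity $\sum_{p \in \nu^{-1}(q)}(b(p)+1) = \mathrm{deg}(\nu)$. Once this local picture is in hand, the remainder is bookkeeping with Euler's formula.
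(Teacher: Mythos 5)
Your proposal is correct and is exactly the argument the paper intends: the paper gives no written proof beyond the sentence ``considering a triangulation of $\SS^2$ and its pullback on $\Sigma$ via $\nu$,'' and you have simply carried out that triangulation count in full, using the local normal form $\nu = P\circ\Psi$ to justify the sheet identity $\sum_{p\in\nu^{-1}(q)}(b(p)+1)=\mathrm{deg}(\nu)$ at branch values. No discrepancy with the paper's approach.
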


Let $S$ be a great circle in $\SS^2$. The following lemma asserts that the nodal set $\nu^{-1}(S)$ forms a pseudograph on $\Sigma$. 
%%%%%%%pseudograph lemma
\begin{lem}\label{lem:pgraph}
$\nu^{-1}(S)$ forms a pseudograph on $\Sigma$. 
\end{lem}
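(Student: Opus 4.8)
The plan is to show that $\nu^{-1}(S)$ carries the structure of a one-dimensional complex embedded in $\Sigma$ by analysing it separately near regular and critical points of the Gauss map, and then to invoke the discreteness from Theorem \ref{thm:main1} together with the finite total curvature assumption to conclude that the complex is finite (so that loops and multiple edges are the only possible deviations from a simple graph, whence the word \emph{pseudograph}). First I would treat a point $x \in \nu^{-1}(S)$ at which $\nu$ is regular. There $\nu$ is a local diffeomorphism onto an open subset of $\SS^2$, and since the great circle $S$ is a smoothly embedded one-submanifold of $\SS^2$, its preimage is locally a smooth embedded arc in $\Sigma$. Thus away from the critical set $C$ the set $\nu^{-1}(S)$ is a disjoint union of smooth arcs; these will be the interiors of the edges.

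The heart of the argument is the local picture at a critical point $p \in C$. If $\nu(p) \notin S$, then for a small enough neighborhood $U$ of $p$ we have $\nu(U) \cap S = \emptyset$, so $p$ contributes nothing. Suppose then $\nu(p) \in S$. Here I would use the normal form established just before the statement: choosing the coordinate disk $D$ centered at $\nu(p)$ and a $C^1$-diffeomorphism $\Psi$ around $p$ with $\nu = P \circ \Psi$, where $P$ is the homogeneous part of degree $n = b(p)+1$, which in the complex coordinate $z$ is $z \mapsto z^{n}$. Then $\nu^{-1}(S) = \Psi^{-1}(P^{-1}(S \cap D))$. After a rotation of the coordinate on $D$ we may take the tangent of $S \cap D$ at the origin to be the real axis; the preimage of a smooth curve through the branch value $0$ under $z \mapsto z^{n}$ then consists of $2n$ arc-ends issuing from $0$, which assemble into $n$ continuous arcs through $0$ (for the straight diameter these are exactly the $n$ lines $\{\arg z = k\pi/n\}$). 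Applying the $C^1$-diffeomorphism $\Psi^{-1}$, I conclude that near $p$ the set $\nu^{-1}(S)$ is an isolated vertex of valence $2n = 2(b(p)+1)$ at which the smooth edges found in the regular-point analysis terminate.

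Finally I would assemble the global structure. By Theorem \ref{thm:main1} the critical set $C$ has no accumulation points, and under the finite total curvature assumption in force in this setting $\Sigma$ compactifies to $\tilde\Sigma$ with $C$ finite and with $\nu$ extending continuously to the ends $p_1,\dots,p_k$ by White's theorem. Hence the vertex set, consisting of the finitely many critical points lying on $\nu^{-1}(S)$ together with those ends $p_j$ with $\nu(p_j) \in S$, is finite. Between consecutive vertices the regular part gives smooth embedded edges, and each vertex has the finite valence computed above, so $\nu^{-1}(S)$ is a finite one-complex embedded in $\Sigma$. Since an edge may return to the same vertex and two vertices may be joined by several edges, this complex is in general a pseudograph rather than a simple graph, which is the assertion.

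The step I expect to be the main obstacle is the critical-point analysis: verifying rigorously that $P^{-1}(S \cap D)$ consists of exactly $2(b(p)+1)$ arc-ends through $p$ and that this incidence structure survives composition with the merely $C^1$ diffeomorphism $\Psi$. The delicate points are that $S$ is a genuinely curved arc rather than a straight diameter of $D$, so one must check that the $n$-th-root branches of $w \mapsto w^{1/n}$ extend as continuous (indeed $C^1$) arcs across the branch point $0$ and that folding for even $n$ is handled correctly, and that a $C^1$-diffeomorphism carries a locally finite family of arcs meeting at a point to such a family while preserving valence. Once this local model is pinned down, the remaining assembly is routine.
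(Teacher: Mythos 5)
Your argument is correct and rests on the same two pillars as the paper's proof: the local normal form $\nu = P\circ\Psi$ at a branch point (giving $2(b(p)+1)$ arc-ends at $p$) and the fact that $\nu$ is a local homeomorphism off the critical set (giving embedded arcs elsewhere). The difference is organizational, and it is worth noting because it affects what the lemma delivers downstream. The paper does not work locally point by point; it lists the singular values $q_1,\dots,q_s$ of $\nu$ lying on $S$ in cyclic order, declares \emph{every} point of $\nu^{-1}(q_i)$ a vertex (including regular preimages, which get valence $2$), and obtains the edges by lifting each arc $q_iq_{i+1}$ of $S$: from a vertex $p^i_j$ exactly $b(p^i_j)+1$ lifts of $q_iq_{i+1}$ emanate. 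This produces the explicit count of $t$ vertices and $b+t$ edges, and that count is exactly what is fed into the Euler-type inequality of Lemma \ref{eq:pgraphEuler} in the proof of Theorem \ref{thm:main3} to get $N_{\phi_a}\geq b+2-2g$. Your vertex set (critical points on $\nu^{-1}(S)$ plus ends mapping into $S$) yields a legitimate pseudograph, but with different $v$ and $e$; if you wanted to reprove Theorem \ref{thm:main3} you would have to recompute $v-e$ for your decomposition (it comes out the same, since subdividing an edge by a regular vertex leaves $v-e$ unchanged), or simply adopt the paper's bookkeeping. Two small points to tidy up in your version: you should also account for closed components of $\nu^{-1}(S)$ containing no vertex at all (circles covering $S$ regularly; there are at most $\deg\nu$ of them, and one either adds an artificial vertex on each or notes they do not affect $v-e$), and the pairing of the $2n$ arc-ends into $n$ arcs through $p$ is not needed for the pseudograph structure, only the count of ends is.
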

\begin{proof}
Let $q_1, \cdots, q_s$ be singular values of $\nu$ on $S$. We may assume that $q_1, \cdots, q_s$ lie on $S$ in this order. For each $q_i$, we set $\{p^{i}_1, \cdots, p^{i}_{t_i}\} = \nu^{-1}(q_i)$. Set $t = \sum_{i}{t_i}$ and $b = \sum_{i, j}{b^{i}_j}$, where $b^{i}_j = b(p^{i}_j)$. 

Consider a local disk around $p^{i}_j$. By the above argument, the arc $q_iq_{i+1}$ is lifted to $b^{i}_j + 1$ curves starting from $p^{i}_j$ and the terminal points are among $p^{i+1}_1, \cdots, p^{i+1}_{t_{i+1}}$ (here we interpreted as $q_{s+1} = q_1$, etc.). Since $\nu$ is local homeomorphism away from its critical sets, each edge has no self-intersections and any two edges do not intersect at their interiors. Thus, $\nu^{-1}(S)$ forms an embedded pseudograph on $\Sigma$ consisting of $t$-vertices and $b + t$-edges. 
\end{proof}

For a subset $A$ of $\Sigma$, we define the branching order $b(\nu, A)$ of $\nu$ with respect to $A$ by 
\[
	b(\nu, A) = \sum_{p \in A} b(p). 
\]
To show an index lower estimate, we need the following graph theoretic lemma (see \cite{N}). 
\begin{lem}\label{eq:pgraphEuler}
Let $\Gamma$ be an embedded pseudograph on the compact surface $S$ of genus $g$. Suppose $\Gamma$ has $v$-vertices and $e$-edges, and $S \setminus \Gamma$ has $N$-components. Then 
\begin{equation}
	v -e + N \geq 2 - 2g. 
\end{equation}
\end{lem}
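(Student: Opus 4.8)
The plan is to relate $v - e + N$ to the Euler characteristic $\chi(S) = 2 - 2g$ of the closed oriented surface $S$ by decomposing $S$ along $\Gamma$ and its complementary faces, and then to use that each face is a surface with boundary. Throughout I would assume $\Gamma \neq \emptyset$ (in the application of Lemma \ref{lem:pgraph}, the nodal set $\nu^{-1}(S)$ is nonempty since $\nu$ is surjective by Lemma \ref{lem:covering}). The first step is the elementary observation that $\Gamma$, viewed as an embedded $1$-complex with $v$ vertices and $e$ edges (loops and multiple edges included), has $\chi(\Gamma) = v - e$.

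Next I would thicken $\Gamma$ to a closed regular neighborhood $\overline{N} \subset S$. Such a neighborhood deformation retracts onto $\Gamma$ --- this holds for any graph embedded in a surface, even in the presence of loops and multi-edges --- so $\chi(\overline{N}) = v - e$. Setting $B = S \setminus \mathrm{int}(\overline{N})$, the connected components $B_1, \dots, B_N$ of the compact surface-with-boundary $B$ correspond bijectively to the $N$ components of $S \setminus \Gamma$. Since $\overline{N} \cap B = \partial \overline{N}$ is a disjoint union of circles with vanishing Euler characteristic, additivity of $\chi$ (inclusion--exclusion) gives
\[
	2 - 2g = \chi(S) = \chi(\overline{N}) + \chi(B) = (v - e) + \sum_{i=1}^{N}\chi(B_i).
\]

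To finish, I would bound $\sum_{i}\chi(B_i) \leq N$. Each $B_i$ is a connected compact oriented surface whose boundary lies in $\partial \overline{N}$ and is nonempty: a face cannot be all of $S$ because $\Gamma \neq \emptyset$, so its closure meets $\Gamma$ and the corresponding $B_i$ carries a nonempty piece of $\partial \overline{N}$. A connected compact oriented surface of genus $h$ with $b \geq 1$ boundary circles satisfies $\chi = 2 - 2h - b \leq 1$, hence $\chi(B_i) \leq 1$ for every $i$. Substituting into the displayed identity yields $2 - 2g \leq (v - e) + N$, which is the asserted inequality.

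The step I expect to require the most care is establishing the \emph{exact} identity $\chi(S) = (v-e) + \sum_i \chi(B_i)$: specifically, arranging the regular-neighborhood decomposition so that $\overline{N}$ retracts onto $\Gamma$ and $\overline{N} \cap B$ is genuinely a union of circles, which is where the loops and multiple edges of a pseudograph must be handled honestly rather than glossed over. Once this is in place the inequality is transparent, being precisely the failure of cellularity: equality holds exactly when every face $B_i$ is a disk, i.e.\ when $\Gamma$ is cellularly embedded, while any additional genus or boundary circle in a face only decreases $\chi(B_i)$ and thereby strengthens the bound.
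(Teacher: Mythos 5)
The paper offers no proof of this lemma at all---it simply defers to Nayatani \cite{N}---so there is nothing internal to compare against; judged on its own, your argument is correct and is the standard one: thicken $\Gamma$ to a regular neighborhood, write $\chi(S) = (v-e) + \sum_i \chi(B_i)$ by additivity along the boundary circles, and use that each complementary piece is a compact surface with nonempty boundary, hence $\chi(B_i) \leq 1$. The two points that genuinely need care are exactly the ones you flag: the regular neighborhood of a pseudograph (loops and multi-edges included) still deformation retracts onto $\Gamma$ and meets the complement in circles, and the case $\Gamma = \emptyset$ must be excluded (otherwise the claim fails on the sphere), which is harmless here since $\nu^{-1}(S)$ is nonempty by surjectivity of $\nu$. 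No gaps.
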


\begin{proof}[proof of Theorem \ref{thm:main3}]
Let $\tilde{\Sigma}$ be the compactification of $\Sigma$. By Lemma \ref{lem:pgraph}, $\nu^{-1}(S)$ forms a pseudograph on $\tilde{\Sigma}$ consisting of $t$-vertices and $b+t$-edges where $b$ and $t$ are the numbers defined in the proof of the lemma. Notice that $\nu^{-1}(S)$ coincides with the nodal set of the Jacobi function $\phi_a$ where $a$ is a unit vector in $\RR^3$ which is orthogonal to $S$. Hence, by Lemma \ref{eq:pgraphEuler}, we obtain
\[
	N_{\phi_a} \geq b + 2-2g. 
\] 

Since $b$ is the total branching order $b(\nu, \nu^{-1}(S))$, it follows from Proposition \ref{prop:Courant} that
\[
	\Ind(\Sigma) \geq N_{\phi_a} -1 \geq b(\nu, \nu^{-1}(S)) + 1-2g. 
\]
\end{proof}

\begin{cor}
Let $\Sigma$ be a $\gamma$-anisotropic minimal surface which has finite total curvature with $A_\gamma > 0$. If the image of critical points $\nu(C)$ is contained in some great circle in $\SS^2$, it follows that
\begin{equation}
	\Ind(\Sigma) \geq -\frac{1}{2\pi}\intS K^\Sigma d\Sigma-1. 
\end{equation}
\end{cor}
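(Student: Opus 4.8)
The plan is to read off the corollary by feeding its hypothesis into Theorem~\ref{thm:main3} and then rewriting the total branching order via the Riemann--Hurwitz identity of Proposition~\ref{prop:RHformula} together with the classical degree--curvature relation for the Gauss map.

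First I would apply Theorem~\ref{thm:main3} to the great circle $S$ that contains $\nu(C)$, obtaining $\Ind(\Sigma) \geq b(\nu, \nu^{-1}(S)) + 1 - 2g$. The decisive point is that the hypothesis $\nu(C) \subset S$ localizes all of the branching over $S$: since $C \subset \nu^{-1}(\nu(C)) \subset \nu^{-1}(S)$, and since $b(p) = 0$ at every regular point $p \notin C$, one has
\[
	b(\nu, \nu^{-1}(S)) = \sum_{p \in \nu^{-1}(S)} b(p) = \sum_{p \in C} b(p) = \sum_{p \in \tilde{\Sigma}} b(p).
\]
Thus the branching order appearing in Theorem~\ref{thm:main3} is upgraded to the total branching order of $\nu$.

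Next I would substitute the Riemann--Hurwitz formula. Since $\Sigma$ has finite total curvature and $A_\gamma > 0$, Proposition~\ref{prop:RHformula} applies and gives $\sum_{p \in \tilde{\Sigma}} b(p) = 2\,\mathrm{deg}(\nu) - \chi(\tilde{\Sigma})$. Using $\chi(\tilde{\Sigma}) = 2 - 2g$ and inserting this into the previous inequality, the genus terms cancel and I obtain the intermediate bound $\Ind(\Sigma) \geq 2\,\mathrm{deg}(\nu) - 1$. It then remains only to identify $\mathrm{deg}(\nu)$ with the total curvature. Since the Jacobian of the Gauss map is exactly the Gaussian curvature $K^\Sigma$, which is nonpositive by \eqref{eq:Gausscurve}, the area formula for the branched covering $\nu : \tilde{\Sigma} \to \SS^2$ yields $4\pi\,\mathrm{deg}(\nu) = \int_\Sigma |K^\Sigma|\,d\Sigma = -\intS K^\Sigma\,d\Sigma$, so that $\mathrm{deg}(\nu) = -\tfrac{1}{4\pi}\intS K^\Sigma\,d\Sigma$. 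Plugging this into $\Ind(\Sigma) \geq 2\,\mathrm{deg}(\nu) - 1$ gives exactly the claimed estimate.

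I expect the only delicate points to be bookkeeping rather than analysis: first, carefully justifying that the hypothesis confines every critical point (including those over the ends of $\Sigma$, which survive in $C \subset \tilde{\Sigma}$) to $\nu^{-1}(S)$, so that the bound of Theorem~\ref{thm:main3} really involves the full branching order; and second, fixing the sign in the degree--curvature identity, where the nonpositivity of $K^\Sigma$ means $\nu$ reverses orientation and one must work with $|K^\Sigma| = -K^\Sigma$ to avoid a spurious sign.
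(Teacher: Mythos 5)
Your proposal is correct and follows essentially the same route as the paper: apply Theorem~\ref{thm:main3} to the great circle containing $\nu(C)$, identify $b(\nu,\nu^{-1}(S))$ with the total branching order $\sum_{p\in\tilde\Sigma}b(p)$, convert via Proposition~\ref{prop:RHformula} and the identity $4\pi\,\mathrm{deg}(\nu)=-\intS K^\Sigma\,d\Sigma$. If anything, you spell out more carefully than the paper does why the hypothesis $\nu(C)\subset S$ forces $b(\nu,\nu^{-1}(S))$ to equal the full branching order.
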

\begin{proof}
Let $S$ be a great circle which contains the image of critical set $\nu(C)$. By Proposition \ref{prop:RHformula}, we obtain
\begin{equation}\label{eq:totalbranch}
	b(\nu, \nu^{-1}(S)) = \sum_{p \in \tilde{\Sigma}} b(p) = 2\mathrm{deg}(\nu) + 2g - 2. 
\end{equation}
Since the absolute total curvature of $\Sigma$ coincides with $4\pi \mathrm{deg}(\nu)$, it follows from Theorem \ref{thm:main3} that 
\[
	\Ind(\Sigma) \geq 2\mathrm{deg}(\nu) + 2g - 2 +1 - 2g = -\frac{1}{2\pi}\intS K^\Sigma d\Sigma-1. 
\]
\end{proof}

Using Theorem \ref{thm:main3}, we may prove the instability of anisotropic minimal surface with low genus. 
\begin{cor}\label{cor:lowgenus}
Let $\Sigma$ be a $\gamma$-anisotropic minimal surface which has finite total curvature with $A_\gamma > 0$. If the genus $g$ of $\Sigma$ is $0$ or $1$, then $\Sigma$ is unstable. 
\end{cor}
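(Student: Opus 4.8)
The plan is to exhibit, for each genus, a single great circle $S$ for which the lower bound of Theorem \ref{thm:main3} already forces $\Ind(\Sigma) \geq 1$; this is precisely the assertion that $\Sigma$ is unstable. First I would dispose of the case $\Ind(\Sigma) = \infty$, in which there is nothing to prove, and henceforth assume the index is finite, so that Theorem \ref{thm:main3} is available. Recall that by its definition, $b(\nu, \nu^{-1}(S)) = \sum_{p \in \nu^{-1}(S)} b(p)$ is a sum of nonnegative integers.

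For $g = 0$ the bound reads $\Ind(\Sigma) \geq b(\nu, \nu^{-1}(S)) + 1$ for every great circle $S$. Since the branching order is always nonnegative, choosing any great circle gives $\Ind(\Sigma) \geq 1$ at once, so no non-planar genus-zero surface can be stable.

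For $g = 1$ the bound becomes $\Ind(\Sigma) \geq b(\nu, \nu^{-1}(S)) - 1$, so the task is to produce a great circle $S$ with $b(\nu, \nu^{-1}(S)) \geq 2$. The key input is the Riemann--Hurwitz identity of Proposition \ref{prop:RHformula}: since $\chi(\tilde{\Sigma}) = 2 - 2g = 0$, it yields $\sum_{p \in \tilde{\Sigma}} b(p) = 2\,\mathrm{deg}(\nu)$. Because $\Sigma$ is non-planar, its Gauss map is non-constant, hence surjective onto $\SS^2$ by Lemma \ref{lem:covering}, so $\mathrm{deg}(\nu) \geq 1$ and the total branching order over $\tilde{\Sigma}$ is at least $2$. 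I then select $S$ by a case distinction: if some branch point $p$ has $b(p) \geq 2$, I take any great circle through $\nu(p)$; if instead every branch point has branching order $1$, then there are at least two of them, and I take a great circle through the images of two such points. In either case $b(\nu, \nu^{-1}(S)) \geq 2$, and Theorem \ref{thm:main3} gives $\Ind(\Sigma) \geq 2 + 1 - 2 = 1$.

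The only delicate point is the geometric selection of $S$ in the genus-one case: one must confirm that a great circle can always be routed through the images of branch points accounting for at least two units of branching order. This rests on the elementary fact that any two points of $\SS^2$ lie on a common great circle (uniquely so unless they coincide or are antipodal). Everything else is a direct substitution into Proposition \ref{prop:RHformula} and Theorem \ref{thm:main3}.
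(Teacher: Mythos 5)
Your proposal is correct and follows essentially the same route as the paper: the $g=0$ case is immediate from Theorem \ref{thm:main3}, and the $g=1$ case uses Proposition \ref{prop:RHformula} together with $\mathrm{deg}(\nu)\geq 1$ to find total branching order at least $2$, then routes a great circle through the relevant image points. Your explicit case distinction on how to choose $S$ (one branch point of order $\geq 2$ versus two of order $1$) merely spells out a step the paper leaves implicit.
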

\begin{proof}
If $g=0$, Theorem \ref{thm:main3} gives $\Ind(\Sigma) \geq 1$. 

Assume that $g=1$. Since $\Sigma$ is not a plain, the mapping degree of the Gauss map $\mathrm{deg}(\nu)$ is greater than $0$. It follows from Proposition \ref{prop:RHformula} that 
\begin{equation}
	\sum_{p \in \tilde{\Sigma}} b(p) = 2\mathrm{deg}(\nu) + 2g - 2 \geq 2g = 2. 
\end{equation}
This implies the existence of a great circle $S$ which contains at least two branch points (counted with multiplicity). Hence, we have $b(\nu, \nu^{-1}(S)) \geq 2$. Thus, Theorem \ref{thm:main3} gives $\Ind(\Sigma) \geq 2 + 1 - 2 = 1$. 
\end{proof}

\end{document}